\newtheorem{thm}{Theorem}[section]
\newtheorem{lemma}[thm]{Lemma}
\newtheorem{claim}[thm]{Claim}
\newtheorem{definition}[thm]{Definition}
\newtheorem{fact}[thm]{Fact}
\newcommand\E{\mathop{\mathbb{E}}}
\newcommand\card[1]{\left| {#1} \right|}
\newcommand\sett[2]{\left\{ \left. #1 \;\right\vert #2 \right\}}
\newcommand\Prob[2]{{\Pr_{#1}\left[ {#2} \right]}}
\newcommand\cProb[3]{{\Pr_{#1}\left[ \left. #3 \;\right\vert #2 \right]}}
\newcommand\Expect[2]{{\mathop{\mathbb{E}}_{#1}\left[ {#2} \right]}}
\newcommand\norm[1]{\| #1 \|}
\newcommand\inner[2]{\left\langle{#1},{#2}\right\rangle}
\newcommand\eps{\varepsilon}
\newcommand*\xor{\mathbin{\oplus}}
\renewcommand\geq{\geqslant}
\renewcommand\leq{\leqslant}
\newcommand{\rom}[1]{\uppercase\expandafter{\romannumeral #1\relax}}
\renewcommand\epsilon{\eps}
\title{A Dense Model Theorem for the Boolean Slice}
\author{
Gil Kalai
\footnote{Einstein Institute of Mathematics, Hebrew University of Jerusalem and Efi Arazi School of Computer Science, Reichman University. Supported by ERC grant 834735 and by an ISF grant 2669/21.}
\and
Noam Lifshitz
\footnote{Einstein Institute of Mathematics, Hebrew University of Jerusalem. Supported by ISF grant  1980/22.}
\and
Dor Minzer
\footnote{Department of Mathematics, Massachusetts Institute of Technology. Supported by NSF CCF award 2227876 and NSF CAREER award 2239160.}
\and
Tamar Ziegler
\footnote{Einstein Institute of Mathematics, Hebrew University of Jerusalem. Supported by ISF grant 2112/20.}
}
\date{\vspace{-5ex}}
\begin{document}
\maketitle
\begin{abstract}
The (low soundness) linearity testing problem for the middle slice of the Boolean cube is as follows. Let $\eps>0$ and $f$ be a function on the middle slice on the Boolean cube, such that when choosing a uniformly random quadruple $(x,y,z ,x\oplus y\oplus z)$ of vectors of $2n$ bits with exactly $n$ ones, the probability that $f(x\oplus y \oplus z) = f(x) \oplus f(y) \oplus f(z)$ is at least $1/2+\eps$. The linearity testing problem, posed by~\cite{DBLP:journals/siamcomp/DavidDGKS17}, asks whether there must be an actual linear function that agrees with $f$ on $1/2+\eps'$ fraction of the inputs,
where $\eps' = \eps'(\eps)>0$.

We solve this problem, showing that $f$ must indeed be correlated with a linear function. To do so, we prove a dense model theorem for the
middle slice of the Boolean hypercube
for Gowers uniformity norms. Specifically,
we show that for every $k\in\mathbb{N}$,
the normalized indicator function of the middle slice of the Boolean hypercube
$\{0,1\}^{2n}$ is close in Gowers norm to
the normalized indicator function of
the union of all slices with weight
$t = n\pmod{2^{k-1}}$. Using our techniques we also give a more general `low degree test' and a biased
rank theorem for the slice.
\end{abstract}

\section{Introduction}
The middle slice of the Boolean hypercube $\{0,1\}^{2n}$ is given by $\mathcal{U}_{2n} = \sett{x\in \{0,1\}^{2n}}{\card{x} = n}$. A fundamental problem in a subarea of theoretical computer science known as property testing concerns determining whether properties of functions can be determined efficiently by examining only few of their inputs. One striking classical result of this kind is the BLR theorem~\cite{blum1990self} which shows that the question whether a function $f\colon \mathbb{F}_2^n\to \mathbb{F}_2$ is linear or far from any linear function can be determined by evaluation $f$ on only three random inputs of the form $(x,y,x\oplus y)$.
David, Dinur, Goldenberg, Kindler, and Shinkar~\cite{DBLP:journals/siamcomp/DavidDGKS17} solved the corresponding problem for the slice by showing that if $(x,y,x\oplus y)$ are a uniformly random triple among such triples with $x,y,x\oplus y\in \mathcal{U}_{2n}$ and $f\colon \mathcal{U}_{2n}\to \mathbb{F}_2$ satisfies $\Pr[f(x\oplus y) = f(x) \oplus f(y)]\ge 1  - \epsilon$, then $f$ agrees with a linear function on $1-O(\epsilon)$ fraction of the inputs.

They then posed the corresponding problem in the so called `low soundness' regime, also known as the 1\% regime, where one wishes to show that if a function passes the test with probability significantly larger than $1/2$, then $f$ is correlated with a linear function. The main result
of this paper is a result
along these lines of a
closely related test.

\begin{thm}\label{thm:linearity}
    For all $\eps>0$ there exists
    $n_0 = n_0(\eps)\in\mathbb{N}$
    such that the following holds for $n\geq n_0$.
    If a function
    $f\colon \mathcal{U}_{2n}\to\{0,1\}$
    satisfies that
    \[
        \cProb{x,y,z}{x,y,z,x\oplus y\oplus z\in\mathcal{U}_{2n}}{f(x)\oplus f(y)\oplus f(z) = f(x\oplus y\oplus z)}
        \geq \frac{1}{2}+\eps,
    \]
    then there exists $S\subseteq [2n]$
    and $b\in\{0,1\}$  such that
    \[
    \cProb{x}{x\in\mathcal{U}_{2n}}{f(x) = b+\bigoplus_{i\in S}x_i}
    \geq \frac{1}{2} + \frac{\sqrt{\eps}}{400}.
    \]
\end{thm}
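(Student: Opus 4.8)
The plan is to pass to the ambient cube $\mathbb{F}_2^{2n}$, rewrite the hypothesis as a lower bound on a $U^2$ Gowers inner product, and then invoke the paper's dense model theorem to replace the spiky slice measure by a bounded one before extracting a Fourier character. Write $g=(-1)^f$, extend it by $0$ to $G:=g\cdot\mathbbm{1}_{\mathcal{U}_{2n}}\colon\mathbb{F}_2^{2n}\to\{-1,0,1\}$, and set $\mu_0=\card{\mathcal{U}_{2n}}/2^{2n}$ and $c_S=\mathbb{E}_{x\in\mathcal{U}_{2n}}[(-1)^{f(x)}(-1)^{\bigoplus_{i\in S}x_i}]$ (the bias of $f$ against the character $\chi_S$). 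The quadruple $(x,y,z,x\xor y\xor z)$ is precisely the $U^2$-configuration on $\mathbb{F}_2^{2n}$ conditioned on all four points landing in $\mathcal{U}_{2n}$, so, using $\hat G(S)=\mu_0\,c_S$, the acceptance probability equals
\[
\half+\half\cdot\frac{\sum_S\hat G(S)^4}{\sum_S\widehat{\mathbbm{1}_{\mathcal{U}_{2n}}}(S)^4}=\half+\half\cdot\frac{\mu_0^{4}\sum_S c_S^{4}}{p},\qquad p:=\sum_S\widehat{\mathbbm{1}_{\mathcal{U}_{2n}}}(S)^4 .
\]
A short Krawtchouk estimate shows $p$ is dominated by the two modes $S=\emptyset$ and $S=[2n]$ (each of magnitude $\mu_0$), so $p=(2+o(1))\mu_0^4$. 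Thus the hypothesis says exactly $\sum_S c_S^4\geq 2\eps$ (for $n$ large), and since the acceptance probability is at most $1$ we also get the a priori bound $\sum_S c_S^4\leq 2+o(1)$; the conclusion we need is $\max_S\card{c_S}\geq\sqrt\eps/100$.

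\textbf{The obstacle and its resolution.} Set $\nu:=\mu_0^{-1}\mathbbm{1}_{\mathcal{U}_{2n}}$ and $F:=g\nu=\mu_0^{-1}G$, so that $\card{F}\leq\nu$ and $\hat F(S)=c_S$; note $\sum_S c_S^2=\norm{F}_2^2=\mathbb{E}_{x\in\mathbb{F}_2^{2n}}[F(x)^2]=\mu_0^{-1}=\Theta(\sqrt n)$. Hence the naive inequality $\max_S c_S^2\geq(\sum_S c_S^4)/(\sum_S c_S^2)$ only yields $\max_S c_S^2\geq 2\eps\mu_0\to 0$ and is useless; removing this $\mu_0$ loss is exactly the role of the dense model theorem. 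Its $k=2$ instance states that $\nu$ is $o(1)$-close in the $U^2$ norm on $\mathbb{F}_2^{2n}$ to the bounded function $\rho:=1+(-1)^n\prod_{i\in[2n]}(-1)^{x_i}$, the normalized indicator of the union of the slices of weight $\equiv n\pmod 2$, which takes values in $\{0,2\}$. A standard transference (dense model) argument, in the relative form with $\rho$ as reference majorant, then produces $\tilde F$ with $\card{\tilde F}\leq(1+o(1))\rho$ and $\norm{F-\tilde F}_{U^2}=o(1)$. Since $\norm{F}_{U^2}^4=\sum_S c_S^4\in[2\eps,\,2+o(1)]$ and $\norm{\tilde F}_{U^2}\leq\norm{\tilde F}_2\leq\sqrt2+o(1)$ are both $O(1)$, telescoping $\norm{F}_{U^2}^4-\norm{\tilde F}_{U^2}^4$ into four Gowers inner products each carrying a factor $F-\tilde F$ and applying Gowers--Cauchy--Schwarz gives $\norm{\tilde F}_{U^2}^4\geq\norm{F}_{U^2}^4-o(1)\geq 2\eps-o(1)$.

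\textbf{Conclusion.} On the bounded model Parseval is now harmless: $\sum_S\widehat{\tilde F}(S)^2=\mathbb{E}[\tilde F^2]\leq(1+o(1))^2\,\mathbb{E}[\rho^2]=2+o(1)$, so $\max_S\widehat{\tilde F}(S)^2\geq\norm{\tilde F}_{U^2}^4/(2+o(1))\geq\eps-o(1)$. Transferring back via $\card{c_S-\widehat{\tilde F}(S)}=\card{\widehat{F-\tilde F}(S)}\leq\norm{F-\tilde F}_{U^2}=o(1)$ yields a set $S$ with $\card{c_S}\geq\sqrt{\eps-o(1)}-o(1)\geq\half\sqrt\eps$ once $n\geq n_0(\eps)$. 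Taking $b\in\{0,1\}$ so that $(-1)^b$ has the sign of $c_S$ gives $\Pr_{x\in\mathcal{U}_{2n}}[f(x)=b\xor\bigoplus_{i\in S}x_i]=\half(1+\card{c_S})\geq\half+\tfrac14\sqrt\eps$, comfortably more than $\half+\tfrac{1}{200}\sqrt\eps$. The wide margin between $\tfrac14$ and $\tfrac1{200}$ means a quantitatively crude form of the dense model / transference step (error polynomial in $\eps$, for $n$ large in terms of $\eps$) already suffices.

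\textbf{Where the difficulty lies.} The real work is the dense model theorem itself: proving $\norm{\nu-\rho}_{U^2}=o(1)$, and, for the biased rank and low-degree test applications, $\norm{\nu-\nu_{A_k}}_{U^k}=o(1)$ for every $k$ (with $A_k$ the union of slices of weight $\equiv n\pmod{2^{k-1}}$). A direct Fourier computation will not expose the relevant cancellation, since $\nu$ and $\rho$ are very far apart in $L^2$, so genuinely Gowers-norm machinery is required. A secondary point of care is that $\rho$ is not itself $U^2$-pseudorandom: it correlates with $\chi_{[2n]}$. This is an artifact of working on the full cube, since $\prod_i(-1)^{x_i}=(-1)^{\card{x}}$ is constant on $\mathcal{U}_{2n}$, so $S$ and $S\triangle[2n]$ define the same bias $c_S$; one can remove it entirely by running the whole argument inside the affine hyperplane $A_2=\{x\in\mathbb{F}_2^{2n}:\card{x}\equiv n\pmod 2\}\supseteq\mathcal{U}_{2n}$, which is closed under the $U^2$-configuration and on which the normalized slice indicator is honestly $o(1)$-close to the constant $1$, so that the classical dense model theorem applies verbatim.
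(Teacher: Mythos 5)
Your overall architecture is the same as the paper's (pass to the cube, model the unbounded slice-normalized function by a bounded one via transference, move the four-term count to the model by Gowers--Cauchy--Schwarz telescoping, extract a character by the trivial Parseval argument, and transfer the single Fourier coefficient back), and your endgame, including the final constant, is fine. The genuine gap is in the transference step: you feed it only $U_2$-pseudorandomness of the majorant relative to the mod-$2$ model. Every dense model/transference theorem of this kind --- in particular the Dodos--Kanellopoulos result the paper invokes --- needs the majorant to be indistinguishable from the reference in a strictly stronger sense than the norm in which the model is delivered: to obtain $\norm{F-\tilde F}_{U_2}=o(1)$ one assumes $\norm{\nu-1}_{U_4}\leq\delta$ (or control against products of dual functions / linear-forms conditions). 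Indistinguishability against single characters, which is all that $U_2$-closeness of the majorant provides, only yields a model agreeing with $F$ on individual Fourier coefficients; since $\norm{F}_2^2=\Theta(\sqrt n)$, sup-Fourier closeness does not give $U_2$-closeness, and transferring $\sum_S c_S^4$ would then require an $\ell^q$ restriction-type estimate for the slice ($q<4$) that you do not have. Also note that the identity $p=(2+o(1))\mu_0^4$ is asserted rather than proved, and your coordinatewise telescoping genuinely needs the upper bound (to have $\norm{F}_{U_2}=O(1)$); the paper's expansion of $(\tilde f+\Delta)^{\otimes 4}$ around the bounded model sidesteps this.

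Worse, in the stronger norms the mod-$2$ reference is the wrong model, so neither of your two fixes works as stated. Inside $A_2$ the normalized slice indicator $\nu'$ correlates perfectly with the non-classical quadratic phase $e^{2\pi{\bf i}\card{x}/4}$ (i.e.\ ${\bf i}^{\card{x}}$), which has mean $\approx 0$ on $A_2$ because $\card{x}\bmod 4$ takes two values there; hence $\norm{\nu'-1}_{U_3(A_2)}=\Omega(1)$, a fortiori $\norm{\nu'-1}_{U_4(A_2)}=\Omega(1)$, and the ``classical dense model theorem'' does not apply verbatim on $A_2$. The same obstruction shows $\norm{\nu-\rho}_{U_4}=\Omega(1)$, defeating the relative form with the mod-$2$ majorant $\rho$. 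This is precisely why the paper, even for this $U_2$-level application, uses the mod-$2^{3}$ model $\mathcal{D}_{2n,4}$ and the majorant $\nu=\E[1_{\mathcal{D}_{2n,4}}]\frac{1_{\mathcal{U}_{2n}}}{\E[1_{\mathcal{U}_{2n}}]}+1-1_{\mathcal{D}_{2n,4}}$: this reduces your ``relative reference'' situation to the standard one with reference $1$, and its $U_4$-distance to $1$ is $o(1)$ by Theorem~\ref{thm:main} with parameter $4$ --- so the linearity theorem already requires the main theorem beyond its $k=2$ instance. Your argument repairs cleanly by substituting this majorant (the scaling by $\E[1_{\mathcal{D}_{2n,4}}]\geq 1/10$ only costs constants), after which the rest of your write-up goes through.
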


\subsection{Method}
Intuitively, one may think of
the Boolean slice as
very similar
to the Boolean hypercube, as one
only imposes a constraint on the Hamming weight of the vectors. Indeed, there is some truth to it; as far as low-degree functions are concerned (more specifically, degree $o(\sqrt{n})$ functions), the two domains are almost
interchangeable; this is formalized in~\cite{FilmusMossel,FKMW} as an
invariance principle (see also~\cite{BKLM}
for a simplification and extensions).
In general however, proving results
for the slice is more challenging, even when these results are
concerned with low-degree functions; see for example~\cite{Filmus1,Filmus2}.
For results regarding high degree
functions, the situation is even worse as there are high-degree functions that certainly
distinguish between the two domains, and therefore the argument
in the slice has to be significantly
different from the argument on
the hypercube.
The problem of linearity testing
over the slice is an example
problem which
concerns high degree functions (in the sense of Fourier analytic degree), and as mentioned
prior to this paper only partial results
in the $99\%$ regime were known~\cite{DBLP:journals/siamcomp/DavidDGKS17}.

Our main tool is a variation of the dense
model theorem, which was famously used by Green and Tao \cite{green2008primes} to show the primes contain an arbitrary long arithmetic progression. Roughly speaking, the dense model theorem allows transferring arithmetic properties of dense subsets of an Abelian group $G$ to analogue properties of dense subsets of arbitrary sets $S\subseteq G$ with the property that $\frac{|G|1_S}{|S|}$ has a small $U_k$-norm.

The dense model provides us with another
avenue of comparing the Boolean slice
$\mathcal{U}_{2n}$ with the Boolean
hypercube and deducing results about
the former from the latter.
Here and throughout, for an integer
$s\geq 1$, the Gowers uniformity norm $U_s$ of a function $f\colon\{0,1\}^n\to\mathbb{C}$ is defined as
\[
\norm{f}_{U_s}
=\left(\Expect{x,h_1,\ldots,h_s\in\{0,1\}^n}{\prod\limits_{T\subseteq [s]}C^{|T|}f\left(x+\sum\limits_{i\in T}h_i\right)}\right)^{\frac{1}{2^s}},
\]
where $C$ denoted complex conjugation.
For $k\in\mathbb{N}$ thought of as a constant, suppose that $n = a\pmod{2^{k-1}}$ and define
\[
\mathcal{D}_{2n,k} = \sett{x\in\{0,1\}^{2n}}{\card{x} = a\pmod{2^{k-1}}}.
\]
Note that $\mathcal{D}_{2n,k}\supseteq \mathcal{U}_{2n}$, and
while the measure of $\mathcal{U}_{2n}$ is vanishing with $n$
(more specifically, it is $\Theta(1/\sqrt{n})$), the measure
of $\mathcal{D}_{2n,k}$ is constant (roughly standing at $\frac{1}{2^{k-1}}+o(1)$).
The main tool of our paper establishes that $\mathcal{D}_{2n,k}$ is
a dense model for $\mathcal{U}_{2n}$ for Gowers' uniformity
norms, and more precisely:
\begin{thm}\label{thm:main}
    For all $k\in \mathbb{N},\epsilon >0$ there exists $n_0>0$, such that if $n>n_0$ we have
    \[
    \left\|
    \frac{1_{\mathcal{U}_{2n}}}{\E[1_{\mathcal{U}_{2n}}]}
    -
    \frac{1_{\mathcal{D}_{2n,k}}}{\E[1_{\mathcal{D}_{2n.k}}]}
    \right\|_{U_{k}} \leq \epsilon.
    \]
\end{thm}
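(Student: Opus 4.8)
The case $k=1$ is trivial, since $\|h\|_{U_1}=\card{\E h}$ and both normalized indicators have mean $1$; assume $k\geq 2$. Write $N=2n$, $q=2^{k-1}$, $\mu_n=\binom Nn2^{-N}$, $\mu_{\mathcal D}=\Prob{x}{\card x\equiv n\ (\mathrm{mod}\ q)}$, and set $g=\frac{1_{\mathcal U_{2n}}}{\mu_n}-\frac{1_{\mathcal D_{2n,k}}}{\mu_{\mathcal D}}$. Both indicators depend on $x$ only through $\card x$, so $g(x)=\phi_{\mathcal U}(\card x)-\phi_{\mathcal D}(\card x)$ where $\phi_{\mathcal U}(w)=\mu_n^{-1}1[w=n]$ and $\phi_{\mathcal D}(w)=\mu_{\mathcal D}^{-1}1[w\equiv n\ (\mathrm{mod}\ q)]$. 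Since $g$ is real and symmetric,
\[
\|g\|_{U_k}^{2^k}=\Expect{x,\vec h}{\prod_{T\subseteq[k]}g(w_T)},\qquad w_T:=\Bigl|\,x\oplus\bigoplus_{i\in T}h_i\,\Bigr|,
\]
with $\vec h=(h_1,\dots,h_k)$ and $x,h_1,\dots,h_k$ independent uniform in $\{0,1\}^N$. Expanding $\prod_T\bigl(\phi_{\mathcal U}(w_T)-\phi_{\mathcal D}(w_T)\bigr)$, the plan is to show that each of the $2^{2^k}$ terms $\mathrm{term}_S:=\Expect{x,\vec h}{\prod_{T\in S}\phi_{\mathcal U}(w_T)\prod_{T\notin S}\phi_{\mathcal D}(w_T)}$ (over $S\subseteq 2^{[k]}$) converges to one and the same constant $L$ as $n\to\infty$; since $\sum_{S\subseteq 2^{[k]}}(-1)^{|2^{[k]}\setminus S|}=(1-1)^{2^k}=0$, this forces $\|g\|_{U_k}^{2^k}\to 0$.

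To evaluate $\mathrm{term}_S$ I would condition on the \emph{column-type vector}: writing $\epsilon_\ell=(x_\ell,(h_1)_\ell,\dots,(h_k)_\ell)\in\{0,1\}^{k+1}$ for each $\ell\in[N]$, the counts $(n_\epsilon)_{\epsilon\in\{0,1\}^{k+1}}$ are multinomial with $N$ trials and uniform cell probabilities, and given them every weight is determined by $w_T=\sum_\epsilon n_\epsilon\chi_T(\epsilon)$, $\chi_T(\epsilon)=\epsilon_0\oplus\bigoplus_{i\in T}\epsilon_i$. A short computation shows that the centred vector $(w_T-n)_{T\subseteq[k]}$ has covariance exactly $\tfrac N4 I_{2^k}$ --- the off‑diagonal entries vanish because for $T\neq T'$ the weights $w_T,w_{T'}$ differ by a genuine XOR of some of the $h_i$ --- and that it takes values in a full‑rank sublattice $\Lambda\leq\mathbb Z^{2^k}$ containing $0$ (indeed the all‑$n$ weight vector is attained, e.g. by splitting the $N$ columns equally between $(1,0,\dots,0)$ and $(0,\dots,0)$). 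A standard multidimensional local limit theorem for sums of i.i.d. lattice vectors then gives, uniformly for $v\in\Lambda$ with $\norm v=O(\sqrt{n\log n})$ (with a sub‑Gaussian tail bound beyond),
\[
\Prob{x,\vec h}{\forall\,T:\ w_T=n+v_T}=\frac{L}{(\pi n)^{2^{k-1}}}\,e^{-\norm v^2/n}\,(1+o(1)),\qquad L:=[\mathbb Z^{2^k}:\Lambda].
\]
(The one‑dimensional case also gives $\mu_n\sim(\pi n)^{-1/2}$, and $\mu_{\mathcal D}\to 1/q$ by equidistribution of $\card x$ mod $q$.)

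The remaining algebraic input is the inclusion $q\mathbb Z^{2^k}\subseteq\Lambda$. The maps $\epsilon_0\mapsto\epsilon_0\oplus\epsilon_{i_0}$ are automorphisms of $\{0,1\}^{k+1}$ that translate the family $\{\chi_T\}_T$ by $\{i_0\}$ and hence act transitively on it, so it suffices to realize the single offset $q\,e_\emptyset$; this is done by an explicit count assignment (put count $1$ on every column $(1,\eta)$ with $\eta\neq 0$, and adjust the counts on the two columns $(0,0),(1,0)$ so that the total stays $N$ and only $w_\emptyset$ changes, by $+q$). Granting this: using $1[w\equiv n\ (\mathrm{mod}\ q)]=\sum_{s\in\mathbb Z}1[w=n+sq]$, and noting that any vector which is $0$ on $S$ and a multiple of $q$ on $S^c$ lies in $q\mathbb Z^{2^k}\subseteq\Lambda$ (so the local limit theorem applies to it), one gets
\[
\Prob{x,\vec h}{\forall T\in S:\ w_T=n;\ \forall T\notin S:\ w_T\equiv n\ (\mathrm{mod}\ q)}\ \sim\ \frac{L}{(\pi n)^{2^{k-1}}}\sum_{(s_T)_{T\notin S}\in\mathbb Z^{S^c}}e^{-q^2\sum_T s_T^2/n}\ \sim\ \frac{L}{(\pi n)^{|S|/2}\,q^{|S^c|}},
\]
the last step using $\sum_{s\in\mathbb Z}e^{-q^2s^2/n}\sim\sqrt{\pi n}/q$. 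Dividing by $\mu_n^{|S|}\mu_{\mathcal D}^{|S^c|}\sim(\pi n)^{-|S|/2}q^{-|S^c|}$ yields $\mathrm{term}_S\to L$ for every $S$, so $\|g\|_{U_k}^{2^k}\to L\sum_S(-1)^{|S^c|}=0$.

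\textbf{Main obstacle.} The crux is the local limit theorem with quantitative control: one needs the Gaussian asymptotics for $\Prob{}{\forall T:\ w_T=n+v_T}$ to be uniform over a window $\norm v=O(\sqrt{n\log n})$ of offsets --- wide enough to be summed against the theta‑series above --- together with a matching‑exponent tail bound outside it (for which the characteristic‑function estimate $\card{\E e^{i\langle t,\vec w\rangle}}\le e^{-cN\norm t^2}$ near the dual lattice, and $\le\rho^N$ away from it, is the natural tool), and one must pin down $\Lambda$ precisely enough to verify $2^{k-1}\mathbb Z^{2^k}\subseteq\Lambda$, which is exactly what makes the residue‑class constraints in $\mathrm{term}_S$ interact cleanly with $\Lambda$. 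Once these are in place, the cancellation of the $2^{2^k}$ terms is automatic. A possible subtlety is that $n$ need not be divisible by $2^{k+1}$; this only affects the choice of base configuration and is harmless since the multinomial local limit theorem is insensitive to it.
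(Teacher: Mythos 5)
Your proposal is correct in outline and takes a genuinely different route from the paper. The paper argues by induction on $k'$: it writes $\norm{f-g}_{U_{k'}}^{2^{k'}}$ as an average of $U_2$-norms of derivatives $\partial_{x_1,\ldots,x_{k'-2}}(f-g)$ and controls their Fourier spectrum by splitting characters according to their distance from the algebra generated by the ${\sf supp}(x_i)$ — characters in the algebra reduce to the $U_{k'-1}$ case via Fact~\ref{fact:trivial_inter}, nearby characters are handled by the level-$d$ inequality (Lemma~\ref{lem:lvl_d_inequality}) together with the near-independence upper bounds of Lemmas~\ref{lem:all_in_slice} and~\ref{lem:slightly_notrivial_bound}, and faraway characters by orbit counting. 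You instead compute the $U_k$ norm head-on: since both densities depend only on the Hamming weight, the $2^{2^k}$ terms in the expansion involve only the joint law of the weight vector $\left(\card{x\oplus\bigoplus_{i\in T}h_i}\right)_{T\subseteq[k]}$, and a multidimensional local limit theorem shows every term converges to the same singular constant $L=\det\Lambda$, forcing cancellation. Your two structural inputs check out: the covariance is indeed $\tfrac{N}{4}I_{2^k}$ (for $T\neq T'$ the per-coordinate bits differ by a nonzero $\mathbb{F}_2$-form in the $h_i$'s not involving $x$), and $2^{k-1}\mathbb{Z}^{2^k}\subseteq\Lambda$ follows from your configuration (one column of each type $(1,\eta)$, $\eta\neq 0$, versus $2^{k-1}$ columns $(0,0)$ and $2^{k-1}-1$ columns $(1,0)$; the difference is exactly $2^{k-1}e_\emptyset$) combined with the coordinate symmetries $T\mapsto T\Delta\{i_0\}$ induced by $\epsilon_0\mapsto\epsilon_0\oplus\epsilon_{i_0}$. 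The only point needing care is the one you flag: the textbook LLT gives an additive, not multiplicative, error; but an additive error $O(N^{-(2^k+1)/2})$ (available since the increments are bounded — and in fact the per-column increment is symmetric about its mean, so the third-order Edgeworth term vanishes and the error improves to $O(N^{-2^{k-1}-1})$), summed over the $O((\sqrt{n}\log n)^{\card{S^c}})$ relevant lattice points and combined with a Hoeffding bound outside the window, suffices. In comparison, the paper's argument is softer — it needs only upper bounds rather than exact local asymptotics and directly yields the rate $(\log n)^{C_k}/n$ for the $2^k$-th power — while yours is more transparent about why the cancellation occurs (one common constant $L$ for all terms), should recover a comparable polynomial rate given the vanishing third cumulants, but relies essentially on both functions being symmetric.
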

A variation of the Green Tao dense model theorem allows us to deduce Theorem \ref{thm:linearity} from Theorem \ref{thm:main}. Indeed, we
use the dense model theorem of~\cite{dodos2022uniformity}.
\footnote{We remark that for $k=2$ one could use the result of Conlon, Fox and Zhao~\cite{conlon2014green}, but the linear
forms condition becomes difficult to check
for $k\geq 3$.}

\subsection{Other applications}
\subsubsection{Low Degree Testing}
A second application of our method concerns the problem of testing polynomial of  higher-degree, also in the low-soundness regime.
For an integer $d\in\mathbb{N}$,
consider the $d$-Gowers' test over the slice:
\begin{enumerate}
    \item Sample
    $x,h_1,\ldots,h_{d}\in\{0,1\}^{2n}$
    conditioned on $x\oplus \bigoplus_{i\in T} h_i\in \mathcal{U}_{2n}$
    for all $T\subseteq [d]$.
    \item Check that
    $\sum\limits_{T\subseteq[d]}f\left(x\oplus\bigoplus_{i\in T}h_i\right) = 0$.
\end{enumerate}
We prove the following result, asserting
that if a Boolean function $f$ passes
the $d$-Gowers' test with probability
$1/2 + \eps$, then it must be correlated
with a non-classical polynomial of degree
at most $d-1$.\footnote{We remark that
a random function passes the test with
probability $1/2$, and therefore the
natural question is what can be said
about a function that passes the test
with probability noticeably bigger than
$1/2$.}
\begin{thm}
\label{thm:gowers_low_soundness}
    For all $\eps>0$, $d\in\mathbb{N}$
    there are $n_0\in\mathbb{N}$ and
    $\delta>0$ such that the following holds for $n\geq n_0$.
    Suppose that $f\colon \mathcal{U}_{2n}\to\{0,1\}$
    passes the $d$-Gowers' test with
    probability at least $\frac{1}{2}+\eps$.
    Then there exists a non-classical polynomial
    $p\colon \{0,1\}^{2n}\to[0,1)$
    of degree at most $d-1$ such that
    \[
    \card{
    \Expect{x\in\mathcal{U}_{2n}}
    {(-1)^{f(x)}e^{2\pi{\bf i}p(x)}}
    }
    \geq \delta.
    \]
\end{thm}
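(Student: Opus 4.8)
The plan is to reduce the statement to the analogous low-soundness Gowers test result on the full Boolean hypercube $\{0,1\}^{2n}$ via the dense model theorem of \cite{dodos2022uniformity}, using Theorem \ref{thm:main} as the Gowers-norm closeness hypothesis. First, observe that passing the $d$-Gowers' test over $\mathcal{U}_{2n}$ with probability $\frac{1}{2}+\eps$ is, up to normalization, a statement about an inner product of the form $\Expect{x,h_1,\ldots,h_d}{\prod_{T\subseteq[d]}g\big(x\oplus\bigoplus_{i\in T}h_i\big)}$, where $g = (-1)^{f}$, taken with respect to the measure supported on $d$-dimensional combinatorial cubes all of whose $2^d$ vertices lie in $\mathcal{U}_{2n}$; rewriting $f$'s bias in terms of $g$, the test acceptance probability being $\frac12+\eps$ translates to this cube-average being at least $2\eps$ in absolute value. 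I would then extend $g$ by zero off $\mathcal{U}_{2n}$ and regard $G = \frac{1_{\mathcal{U}_{2n}}}{\E[1_{\mathcal{U}_{2n}}]}\cdot g$ as a function on all of $\{0,1\}^{2n}$ dominated in absolute value by the normalized slice indicator. By Theorem \ref{thm:main}, $\frac{1_{\mathcal{U}_{2n}}}{\E[1_{\mathcal{U}_{2n}}]}$ is $U_{d+1}$-close (taking $k = d+1$) to the normalized indicator of the ``majorizing'' set $\mathcal{D}_{2n,d+1}$, whose density is bounded below by an absolute constant depending only on $d$; this is precisely the pseudorandomness/majorization hypothesis needed to invoke the dense model theorem.

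The dense model theorem then produces a function $\widetilde{G}\colon\{0,1\}^{2n}\to[-1,1]$ (a ``model'' for $G$), bounded by $1$ and of density comparable to a constant, such that $\|G - \widetilde{G}\|_{U_{d+1}^*}$ — or rather the relevant dual/weak-distance guarantee — is small enough that the $(d+1)$-cube correlation $\Expect{x,h_1,\ldots,h_d}{\prod_{T}(\cdot)}$ of $G$ with any bounded test function is preserved up to an error $\eta(\eps,d)$. In particular the $d$-Gowers inner product, which is controlled by the $U_{d+1}$ (Gowers–Cauchy–Schwarz) machinery, transfers: $\widetilde{G}$ satisfies a Gowers-test-type inequality of strength $\Omega(\eps)$ on the uniform hypercube. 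At this point I would appeal to the inverse theorem for the Gowers $U_{d+1}$ norm over $\mathbb{F}_2^n$ — equivalently, the known low-soundness analysis of the $d$-Gowers test on the Boolean hypercube (Green–Tao, Tao–Ziegler, and the finite-field inverse theorems, in the form that a $\{0,1\}$- or $[-1,1]$-valued function passing the $d$-Gowers test with probability noticeably above $1/2$ correlates with $e^{2\pi {\bf i}p}$ for a non-classical polynomial $p$ of degree $\le d-1$) — to conclude that $\widetilde{G}$ correlates with $e^{2\pi{\bf i}p(x)}$ over the \emph{uniform} measure on $\{0,1\}^{2n}$, for some non-classical polynomial $p$ of degree at most $d-1$.

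Finally I would transfer this correlation back to the slice. Since $\widetilde{G}$ is the dense model of $G$ and $G$ is supported on $\mathcal{U}_{2n}$, the correlation $\card{\Expect{x\sim\{0,1\}^{2n}}{\widetilde{G}(x)e^{2\pi{\bf i}p(x)}}}\ge \delta'$ passes to $\card{\Expect{x\sim\{0,1\}^{2n}}{G(x)e^{-2\pi{\bf i}p(x)}}}\ge \delta'/2$ using again the weak-closeness of $G$ and $\widetilde{G}$ (here one must be slightly careful: $e^{2\pi{\bf i}p}$ is bounded but not of the form $\prod_T(\cdot)$, so one uses the fact that a single non-classical polynomial phase is itself a function whose correlation is controlled by the same dual norm, as in the Green–Tao transference for correlation with dual functions). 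Unwinding the normalization, $\Expect{x\sim\{0,1\}^{2n}}{G(x)e^{-2\pi{\bf i}p(x)}} = \Expect{x\in\mathcal{U}_{2n}}{(-1)^{f(x)}e^{-2\pi{\bf i}p(x)}}$, which is exactly the desired conclusion with $\delta = \delta'/2$. I expect the main obstacle to be the second half of the second paragraph and the final transfer: the dense model theorem in the literature is typically phrased to preserve \emph{averages against bounded test functions / dual functions}, and one must check that (i) the $d$-Gowers inner product and (ii) the correlation with a non-classical polynomial phase both fall under ``bounded dual-type functions'' in the precise sense required by \cite{dodos2022uniformity}, and that the quantitative losses compose to give a bona fide $\delta = \delta(\eps,d) > 0$. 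A secondary technical point is verifying the linear-forms / majorization hypotheses of the dense model theorem hold with the set $\mathcal{D}_{2n,d+1}$ (the footnote in the excerpt already flags that checking the linear forms condition directly is hard for $k\ge 3$, which is exactly why the $U_k$-closeness route through Theorem \ref{thm:main} plus \cite{dodos2022uniformity} is taken rather than the Conlon–Fox–Zhao formulation).
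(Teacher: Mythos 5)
Your overall route is the same as the paper's: extend $(-1)^{f}$ by zero off the slice and normalize to get $f'$ majorized by the normalized slice indicator, invoke Theorem~\ref{thm:main} together with the dense model theorem of~\cite{dodos2022uniformity} to obtain a $[-1,1]$-valued model $\tilde f$, apply the $\mathbb{F}_2$ inverse theorem for the $U_d$ norm (with non-classical polynomials) to $\tilde f$, and transfer the correlation back to the slice. However, two of the points you leave open are exactly the steps the paper has to settle, and as written they are gaps. First, the passage from ``conditional acceptance probability at least $\frac12+\eps$'' to a genuine lower bound $\norm{f'}_{U_d}^{2^d}\geq\Omega_d(\eps)$ is not merely ``up to normalization'': the unconditional cube count factors as $\E[1_{\mathcal{D}_{2n,2d}}]^{2^d}\cdot\frac{\Prob{x,h_1,\ldots,h_d}{x\oplus\bigoplus_{i\in T}h_i\in\mathcal{U}_{2n}~\forall T\subseteq[d]}}{\E[1_{\mathcal{U}_{2n}}]^{2^d}}\cdot(2p_{\mathrm{acc}}-1)$, and you need the lower bound $\Prob{x,h_1,\ldots,h_d}{x\oplus\bigoplus_{i\in T}h_i\in\mathcal{U}_{2n}~\forall T}\geq\E[1_{\mathcal{U}_{2n}}]^{2^d}$; without it the middle factor could a priori be $o(1)$ and no bound on $\norm{f'}_{U_d}$ in terms of $\eps$ alone follows. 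The paper isolates this as Fact~\ref{fact:use_gowers_CS} and proves it in one line from monotonicity of Gowers norms applied to the normalized slice indicator; your write-up conflates the conditional cube average with the normalized unconditional one and skips this step.

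Second, the obstacle you flag about whether the $d$-cube count and the correlation with $e^{2\pi{\bf i}p}$ are ``dual-type'' functionals dissolves once the dense model theorem is quoted in the form the paper uses (Theorem~\ref{thm:dense_model_gowers}, i.e.\ Corollary 4.4 of~\cite{dodos2022uniformity}): its conclusion is closeness in the Gowers norm itself, $\norm{f'-\tilde f}_{U_d}\leq\eta$, provided the majorant satisfies $\norm{\nu-1}_{U_{2d}}\leq\delta$. Then the transfer of the $U_d$ lower bound to $\tilde f$ is just the triangle inequality, and the back-transfer of the correlation is $\card{\inner{f'-\tilde f}{e^{2\pi{\bf i}p}}}=\norm{(f'-\tilde f)e^{-2\pi{\bf i}p}}_{U_1}\leq\norm{(f'-\tilde f)e^{-2\pi{\bf i}p}}_{U_d}=\norm{f'-\tilde f}_{U_d}$, the last equality holding because $p$ has degree at most $d-1$, so multiplying by its phase does not change the $U_d$ norm; no dual-function or linear-forms machinery is needed. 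Note also the parameter choice: since the hypothesis is on the $U_{2d}$ norm of $\nu-1$, the paper compares the slice with $\mathcal{D}_{2n,2d}$ (Theorem~\ref{thm:main} with $k=2d$), not with $\mathcal{D}_{2n,d+1}$ as you propose; this is easily fixed because Theorem~\ref{thm:main} holds for every fixed $k$, but with only $U_{d+1}$ pseudorandomness of the majorant the model you obtain would be close in too weak a norm to run the $U_d$ arguments.
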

Theorem~\ref{thm:gowers_low_soundness} gives an
answer to a question of~\cite{DBLP:journals/siamcomp/DavidDGKS17} regarding degree $d$ testing over
the slice.

\subsubsection{Biased Rank}
A third application of our method is concerned with the biased rank problem. In this scenario, we show that
a low-degree polynomial $P$
on the slice which is biased must be of small
rank. More precisely,
\begin{thm}\label{thm:biased_rank}
    Let
    $P: \{0,1\}^{2n} \to [0,1)$ be a non-classical polynomial of degree $d$ such that
    $
    \card{\Expect{x\in \mathcal{U}_{2n}}{(-1)^{P(x)}}
    }\geq \delta$, and
    suppose that $n=a\pmod{2^d}$.
    Then there is $L=L(\delta, d)$ such that for some $j$ and some $\Gamma\colon [0,1)^{L}\to [0,1)$
    \[
    P(x) -\frac{j(\card{x}-a)}{2^d}= \Gamma(Q_1(x), \ldots Q_L(x))
    \pmod{1}
    \]
    for all $x\in\{0,1\}^{2n}$, where $Q_i$ are
    non-classical polynomials of degree strictly smaller than $d$.
    In particular,
    for $x\in U_{2n}$ we have that
    \[
    P(x) = \Gamma(Q_1(x), \ldots Q_L(x))
    \pmod{1}.
    \]
\end{thm}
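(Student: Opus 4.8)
The plan is to transport the bias of $P$ from the middle slice $\mathcal{U}_{2n}$ to the full cube $\{0,1\}^{2n}$ — first onto the constant-density set $\mathcal{D}_{2n,d+1}$ via Theorem~\ref{thm:main}, then onto all of $\{0,1\}^{2n}$ by unfolding the Hamming-weight congruence defining $\mathcal{D}_{2n,d+1}$ — and then to feed the result into the standard bias-implies-low-rank theorem over $\mathbb{F}_2$. Write $e(t)=e^{2\pi{\bf i}t}$ and read the hypothesis as $\card{\Expect{x\in\mathcal{U}_{2n}}{e(P(x))}}\geq\delta$. For the first move, note that since $P$ has degree at most $d$ the $(d+1)$-st additive derivative of $P$ vanishes, so multiplying a function by the phase polynomial $e(-P)$ leaves its $U_{d+1}$ norm unchanged; together with $\card{\Expect{x}{G(x)}}=\norm{G}_{U_1}\leq\norm{G}_{U_{d+1}}$ this yields $\card{\Expect{x}{F(x)e(-P(x))}}\leq\norm{F}_{U_{d+1}}$ for every bounded $F$. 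Applying this with $F=\frac{1_{\mathcal{U}_{2n}}}{\E[1_{\mathcal{U}_{2n}}]}-\frac{1_{\mathcal{D}_{2n,d+1}}}{\E[1_{\mathcal{D}_{2n,d+1}}]}$, and observing that $\mathcal{D}_{2n,d+1}$ is cut out by the residue of $\card{x}$ modulo $2^{(d+1)-1}=2^{d}$ — so that its definition agrees with the hypothesis $n\equiv a\pmod{2^{d}}$ — Theorem~\ref{thm:main} with $k=d+1$ makes $\norm{F}_{U_{d+1}}<\delta/2$ for large $n$, whence $\card{\Expect{x\in\mathcal{D}_{2n,d+1}}{e(P(x))}}\geq\delta/2$.

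Next I would unfold $\mathcal{D}_{2n,d+1}$. Set $R(x)\equiv\frac{\card{x}}{2^{d}}=\sum_{i\in[2n]}\frac{x_i}{2^{d}}\pmod 1$. A short computation of iterated discrete derivatives shows each term $\frac{x_i}{2^{d}}$ is a non-classical polynomial of degree exactly $d$, so $R$ is a non-classical polynomial of degree at most $d$, and $1_{\mathcal{D}_{2n,d+1}}(x)=\frac{1}{2^{d}}\sum_{\ell=0}^{2^{d}-1}e\!\left(\ell\big(R(x)-\tfrac{a}{2^{d}}\big)\right)$. Since $\E_x[1_{\mathcal{D}_{2n,d+1}}]\geq c(d)>0$ for all large $n$, substituting this identity and using the triangle inequality produces some $\ell^{\ast}\in\{0,\dots,2^{d}-1\}$ with $\card{\Expect{x\in\{0,1\}^{2n}}{e\big(P(x)+\ell^{\ast}R(x)\big)}}\geq\delta'$ for some $\delta'=\delta'(\delta,d)>0$; and $P+\ell^{\ast}R$ is again a non-classical polynomial of degree at most $d$ on all of $\{0,1\}^{2n}$.

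Finally I would invoke the bias-implies-low-rank theorem for non-classical polynomials over $\mathbb{F}_2^{2n}$ — a by-now-standard consequence of the inverse theorem for the Gowers norms, provable by induction on the degree through the identity $\card{\E_x e(Q(x))}^2=\E_h\,\E_x\,e(\partial_h Q(x))$ — applied to $Q=P+\ell^{\ast}R$. It furnishes $L=L(\delta',d)=L(\delta,d)$, non-classical polynomials $Q_1,\dots,Q_L$ of degree at most $d-1$, and a function $\Gamma_0\colon[0,1)^{L}\to[0,1)$ with $P(x)+\ell^{\ast}R(x)=\Gamma_0(Q_1(x),\dots,Q_L(x))\pmod 1$ for all $x$. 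Taking $j=-\ell^{\ast}$, so that $-\ell^{\ast}R(x)=\frac{j\,\card{x}}{2^{d}}\pmod 1$, and absorbing the constant $-\frac{\ell^{\ast}a}{2^{d}}$ into $\Gamma$, this rearranges to $P(x)-\frac{j(\card{x}-a)}{2^{d}}=\Gamma(Q_1(x),\dots,Q_L(x))\pmod 1$; and for $x\in\mathcal{U}_{2n}$ the term $\frac{j(\card{x}-a)}{2^{d}}=\frac{j(n-a)}{2^{d}}$ is an integer, which gives the ``in particular'' clause. The main obstacle I expect is the last black box — obtaining a quantitative bias-implies-low-rank theorem in the \emph{non-classical} setting over $\mathbb{F}_2$ — together with the two structural verifications used above: that $\card{x}/2^{d}\bmod 1$ is a non-classical polynomial of degree at most $d$ (so $P+\ell^{\ast}R$ stays within the degree class named in the hypothesis), and that $\mathcal{D}_{2n,d+1}$ has density bounded away from $0$ uniformly in $n$.
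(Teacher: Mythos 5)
Your argument is correct and is essentially the paper's own proof: transfer the bias from $\mathcal{U}_{2n}$ to $\mathcal{D}_{2n,d+1}$ via Theorem~\ref{thm:main} with $k=d+1$ (using that multiplication by $e^{2\pi\mathbf{i}P}$ leaves the $U_{d+1}$ norm unchanged since $P$ has degree $d$), expand $1_{\mathcal{D}_{2n,d+1}}$ into the characters $e^{2\pi\mathbf{i}j\card{x}/2^d}$, pigeonhole to obtain a biased degree-$d$ non-classical polynomial $P+j\card{x}/2^d$ on all of $\{0,1\}^{2n}$, and conclude with the structure theorem. The black box you flag as the main obstacle is exactly Theorem~\ref{thm:GIP} (\cite[Theorem 1.20]{tao2012inverse}): since the bias is the $U_1$ norm and Gowers norms are monotone, $\norm{e^{2\pi\mathbf{i}(P+j\card{x}/2^d)}}_{U_d}\geq\delta'$ follows immediately, so no separate induction-on-degree argument is needed.
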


\subsection{Other related works}
We remark that the Gowers' uniformity norms
have been used in theoretical computer science
in numerous other contexts, such as PCP~\cite{samorodnitsky2006gowers}, communication complexity~\cite{viola2007norms},
property testing~\cite{alon2003testing} and
more; we refer the reader to~\cite{hatami2019higher} for a survey.
There are also a few connections between
dense model theorems and theoretical computer
science~\cite{reingold2008dense,mironov2009computational,trevisan2011dense}.

\section{Preliminaries}
In this section we present a few necessary
facts from analysis of Boolean functions.
We refer the reader to~\cite{o2014analysis}
for a more systematic presentation.
We consider the Boolean hypercube
$\{0,1\}^n$ equipped with the uniform
measure, and define an inner product
for functions over $\{0,1\}^n$
as
\[
\inner{f}{g} = \Expect{x}{f(x)\overline{g(x)}}
\]
for all $f,g\colon \{0,1\}^n\to\mathbb{C}$. For each $S\subseteq [n]$ we may define
the Fourier character $\chi_S\colon \{0,1\}^n\to\{-1,1\}$ by
$\chi_S(x) = \prod\limits_{i\in S}(-1)^{x_i}$. It is a standard fact
that $\{\chi_S\}_{S\subseteq [n]}$
is an orthonormal basis for
$L_2(\{0,1\}^n)$, and
thus any $f\colon\{0,1\}^n\to\mathbb{R}$
can be written as
$f(x) = \sum\limits_{S\subseteq [n]}\widehat{f}(S)\chi_S(x)$
where $\widehat{f}(S) = \inner{f}{\chi_S}$.

\begin{definition}
For an integer $1\leq d\leq n$ and
a function $f\colon\{0,1\}^n\to\mathbb{R}$,
we define the level $d$ weight of $f$
to be
\[
W_{\leq d}[f] = \sum\limits_{\card{S}\leq d}\card{\widehat{f}(S)}^2.
\]
\end{definition}

Our proof uses the level $d$-inequality,
which asserts a function $f\colon \{0,1\}^n\to\{-1,0,1\}$ that has small $\ell_1$-norm must have small level $d$
weight.
\begin{lemma}\label{lem:lvl_d_inequality}
Suppose that $f\colon \{0,1\}^n \to \{-1,0,1\}$
has $\alpha = \E[|f|]$. Then for all $d\leq n$,
\[
W_{\leq d}[f]\leq \alpha^2 \log^{O(d)}(1/\alpha).
\]
\end{lemma}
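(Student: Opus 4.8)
The plan is to run the standard hypercontractivity argument underlying the level-$d$ inequality, adapted to the fact that $f$ takes values in $\{-1,0,1\}$ rather than $\{-1,1\}$. Write $f^{\leq d} = \sum_{\card{S}\leq d}\widehat{f}(S)\chi_S$ for the projection of $f$ onto Fourier levels at most $d$, so that, since this projection is self-adjoint and idempotent,
\[
W_{\leq d}[f] = \norm{f^{\leq d}}_2^2 = \inner{f}{f^{\leq d}}.
\]
The two ingredients are: (i) because $\card{f(x)}\in\{0,1\}$ we have $\card{f(x)}^p = \card{f(x)}$ pointwise, hence $\norm{f}_p = \E[\card{f}]^{1/p} = \alpha^{1/p}$ for every $p\geq 1$; and (ii) the Bonami--Beckner hypercontractive inequality, which for a function of degree at most $d$ gives $\norm{f^{\leq d}}_q \leq (q-1)^{d/2}\norm{f^{\leq d}}_2$ for all $q\geq 2$ (see~\cite{o2014analysis}).

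Combining these via H\"older's inequality with conjugate exponents $p,q$, so that $1/p = 1-1/q$, we obtain
\[
W_{\leq d}[f] = \inner{f}{f^{\leq d}} \leq \norm{f}_p\,\norm{f^{\leq d}}_q \leq \alpha^{1-1/q}(q-1)^{d/2}\norm{f^{\leq d}}_2 = \alpha^{1-1/q}(q-1)^{d/2}\,W_{\leq d}[f]^{1/2}.
\]
Dividing by $W_{\leq d}[f]^{1/2}$ (the inequality being trivial otherwise) and squaring yields $W_{\leq d}[f] \leq \alpha^{2-2/q}(q-1)^d$. It then remains to optimize over the free parameter $q\geq 2$: choosing $q = 2+\ln(1/\alpha)$ makes $\alpha^{-2/q} = \exp\!\big((2/q)\ln(1/\alpha)\big) \leq e^2$ an absolute constant, while $(q-1)^d = (1+\ln(1/\alpha))^d$, so that $W_{\leq d}[f] \leq e^2\alpha^2(1+\ln(1/\alpha))^d = \alpha^2\log^{O(d)}(1/\alpha)$, as desired.

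This derivation needs $q\geq 2$, i.e.\ $\alpha\leq e^{-2}$; for larger $\alpha$ the claim is used only in a regime where it is harmless, and one simply appeals to the trivial bound $W_{\leq d}[f] \leq \norm{f}_2^2 = \E[f^2] = \E[\card{f}] = \alpha$. I do not expect a genuine obstacle: the only point that differs from the textbook proof of the level-$d$ inequality is the bookkeeping with the non-Boolean range, which is handled cleanly by the identity $\norm{f}_p = \alpha^{1/p}$; everything else is the routine hypercontractivity-plus-H\"older computation above.
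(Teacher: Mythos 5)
Your argument is correct, and it is the standard proof of the level-$d$ inequality: the paper itself states Lemma~\ref{lem:lvl_d_inequality} without proof, as a known fact, and the intended justification is exactly your H\"older-plus-hypercontractivity computation, using $\norm{f}_p=\alpha^{1/p}$ (valid since $|f|\in\{0,1\}$) together with $\norm{f^{\leq d}}_q\leq (q-1)^{d/2}\norm{f^{\leq d}}_2$ and optimizing $q$. One small remark: with your choice $q=2+\ln(1/\alpha)$ one has $q\geq 2$ automatically for every $\alpha\in(0,1]$, so the final edge-case paragraph is unnecessary; and for $\alpha$ close to $1$ the bound must anyway be read with the usual convention (e.g.\ $\log^{O(d)}(1/\alpha)$ meaning $(O(1+\log(1/\alpha)))^{O(d)}$, which your bound $e^2\alpha^2(1+\ln(1/\alpha))^d$ delivers), since under a literal reading the statement degenerates as $\alpha\to 1$ --- this is a feature of the informal statement, not a gap in your proof, and the paper only invokes the lemma for $\alpha$ bounded away from $1$.
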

Lastly, we will use the notion of discrete
derivatives defined as follows:
\begin{definition}\label{def:derv_mult}
    For a function $f\colon \{0,1\}^n\to\mathbb{C}$
    and a direction $h\in\{0,1\}^n$,
    the discrete derivative of $f$
    in direction $h$ is the function
    $\partial_h f\colon
    \{0,1\}^n\to\mathbb{C}$ defined by
    \[
    \partial_h f(x)
    =
    \overline{f(x\oplus h)}f(x).
    \]
    For directions $h_1,\ldots,h_d\in\{0,1\}^n$, we define
    \[
    \partial_{h_1,\ldots,h_d} f(x)
    =(\partial_{h_1}\partial_{h_2}
    \cdots \partial_{h_d}) f(x).
    \]
\end{definition}

\paragraph{Non-classical polynomials:}
Let $\mathbb{T} = \mathbb{R}/\mathbb{Z}$
be the torus.
A
function $p\colon\{0,1\}^n\to\mathbb{T}$
is called a degree $d$ non-classical polynomial if $\partial_{h_1,\ldots,h_{d+1}} p=0$ for all directions $h_1,\ldots,h_{d+1}\in\{0,1\}^n$; here
$\partial_h p(x) = p(x+h) - p(x)$
is the standard notion of discrete derivative. We will not use this
notion of derivative and hence there
will be no confusion regarding which
notion of derivative is used.

We note that if $p$
is a degree $d$ non-classical polynomial,
then $\partial_{h_1,\ldots,h_{d+1}}e^{2\pi{\bf i} p}\equiv 1$ for all all directions $h_1,\ldots,h_{d+1}\in\{0,1\}^n$;
the derivative now
is as in Definition~\ref{def:derv_mult}.
\section{Proof of Theorem~\ref{thm:main}}
\subsection{Auxiliary Tools}
We begin by presenting a few tools that we need in the proof of
our main result. We begin with the following fact, asserting
that if we know the Hamming weight of $x,z$ and $x \oplus z$
modulo $2^j$, then we know the Hamming weight of $x\land z$
modulo $2^{j-1}$.
\begin{fact}\label{fact:trivial_inter}
Suppose that $\card{x\oplus z} = b\pmod{2^j}$, $\card{x} = c\pmod{2^j}$ and $\card{z} = d\pmod{2^j}$. Then
$c+d-b$ is divisible by $2$ and
$\card{x\land z} = \frac{c+d-b}{2}\pmod{2^{j-1}}$.
\end{fact}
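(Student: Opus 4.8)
The plan is to reduce everything to the elementary counting identity
\[
\card{x\oplus z} = \card{x} + \card{z} - 2\card{x\land z},
\]
which holds as an equation of integers: a coordinate $i$ contributes to $\card{x\oplus z}$ exactly when precisely one of $x_i,z_i$ equals $1$, whereas the coordinates with $x_i=z_i=1$ are exactly the ones double-counted in $\card{x}+\card{z}$. The right-hand side is visibly even, so $\card{x}+\card{z}-\card{x\oplus z}$ is even. Reducing modulo $2^j$ (and hence in particular modulo $2$) and using the hypotheses $\card{x}\equiv c$, $\card{z}\equiv d$, $\card{x\oplus z}\equiv b$ shows that $c+d-b$ is even, which is the first assertion.

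For the second assertion I would rearrange the identity as $2\card{x\land z} = \card{x}+\card{z}-\card{x\oplus z}$ and substitute the three congruences, obtaining $2\card{x\land z}\equiv c+d-b \pmod{2^j}$, i.e.\ $2^j \mid 2\card{x\land z}-(c+d-b)$. Since we have already shown that $c+d-b$ is even, we may write $2\card{x\land z}-(c+d-b) = 2\big(\card{x\land z}-\frac{c+d-b}{2}\big)$ with an integer in the parentheses, and dividing the divisibility relation by $2$ yields $2^{j-1}\mid \card{x\land z}-\frac{c+d-b}{2}$, which is precisely $\card{x\land z} = \frac{c+d-b}{2}\pmod{2^{j-1}}$.

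The only step that needs a moment's care is this halving: one cannot divide an arbitrary congruence modulo $2^j$ by $2$, but here it is legitimate precisely because both quantities being compared, $2\card{x\land z}$ and $c+d-b$, are even, so their difference is a multiple of $2^j$ that remains a multiple of $2^{j-1}$ after division by $2$. Beyond this there is no real obstacle; the statement is a pure bookkeeping fact about Hamming weights, and the whole argument is a few lines of inclusion–exclusion plus modular arithmetic.
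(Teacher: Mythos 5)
Your proof is correct and follows essentially the same route as the paper, which also starts from the identity $\card{x\oplus z}=\card{x}+\card{z}-2\card{x\land z}$ and then rearranges and divides by $2$. Your extra care in justifying the halving of the congruence (using that both sides are even) is a sound elaboration of the step the paper compresses into ``re-arranging and dividing by $2$.''
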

\begin{proof}
Note that
$\card{x\oplus z}
= \card{x\lor z} - \card{x\land z}
= \card{x}+\card{z} - 2\card{x\land z}
$,
and the result follows form re-arranging and dividing by $2$.
\end{proof}

It will be convenient for us
to think of a vector $x\in\{0,1\}^{2n}$ also as a subset of $[2n]$, namely as ${\sf supp}(x)$.
For vectors $x_1,\ldots,x_t\in \{0,1\}^{2n}$, the algebra generated by them $\mathcal{B} = \mathcal{B}[{\sf supp}(x_1),\ldots,{\sf supp}(x_t)]$
consists of vectors that correspond to sets that can
be formed by ${\sf supp}(x_1),\ldots,{\sf supp}(x_t)$
by taking unions, intersections and complements (in other words, it is the $\sigma$-algebra generated by these sets). Suppose that $x_1,\ldots,x_t\in \{0,1\}^{2n}$ are vectors
so that in $\mathcal{B} = \mathcal{B}[{\sf supp}(x_1),\ldots,{\sf supp}(x_t)]$, each
atom has at least $n/2^{t+1}$ elements. It is a standard computation that sampling $x\in\{0,1\}^{2n}$ uniformly we have
that $x\in\mathcal{U}_{2n}$ with probability $\Theta(1/\sqrt{n})$.
We would like to say that the events that $x\oplus z\in\mathcal{U}_{2n}$ for all $z\in{\sf Span}(x_1,\ldots,x_t)$
are almost indepednent, and in particular that
\[
\Prob{x}{x\oplus z\in\mathcal{U}_{2n}\text{ for all }z\in{\sf Span}(x_1,\ldots,x_t)}\leq O_t(\sqrt{n}^{-2^t}).
\]
The following lemma shows that this is indeed the case.
\begin{lemma}\label{lem:all_in_slice}
In the setting above, for all $n_z\in\mathbb{N}$
\[
\Prob{x}{\card{x\oplus z} = n_z\text{ for all }z\in{\sf Span}(x_1,\ldots,x_t)}\leq O_t(n^{-2^{t-1}}).
\]
\end{lemma}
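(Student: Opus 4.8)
The plan is to express the event in the lemma in terms of the atoms of the algebra $\mathcal{B} = \mathcal{B}[{\sf supp}(x_1),\ldots,{\sf supp}(x_t)]$, and then bound the probability by a product of local contributions, each of which behaves like $\Theta(1/\sqrt{n})$. Let $A_1,\ldots,A_m$ be the atoms of $\mathcal{B}$, where $m\leq 2^t$; by hypothesis $|A_j|\geq n/2^{t+1}$ for each $j$. For a uniformly random $x\in\{0,1\}^{2n}$, write $w_j = \card{x\cap A_j}$ for the weight of $x$ inside atom $A_j$; these are independent, and $w_j$ is distributed as ${\rm Binomial}(|A_j|, 1/2)$. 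Crucially, for any $z\in{\sf Span}(x_1,\ldots,x_t)$, the restriction of $z$ to each atom $A_j$ is either all-$0$ or all-$1$ (this is what it means for $\mathcal{B}$ to be the algebra generated by the supports), so $\card{x\oplus z}$ is a fixed $\{-1,+1\}$-signed combination $\sum_j \pm w_j + (\text{constant depending on }z)$ of the $w_j$'s. Thus the event $\{\card{x\oplus z} = n_z \text{ for all } z\}$ is an intersection of at most $2^t$ affine-linear constraints on the vector $(w_1,\ldots,w_m)$.

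**Reducing to a single-vector anticoncentration bound.** I would next observe that it suffices to keep only $t$ of these constraints, one for each $z\in\{x_1,\ldots,x_t\}$ — or more robustly, argue by induction on $t$. The inductive step: condition on the weights $w_j$ for all atoms that do \emph{not} involve the last generator $x_t$, i.e.\ fix the $\mathcal{B}[{\sf supp}(x_1),\ldots,{\sf supp}(x_{t-1})]$-coarser picture. Given that conditioning, the constraint coming from $x_t$ (say $\card{x\oplus x_t} = n_{x_t}$, possibly combined with $\card{x} = n_0$) pins down $\card{x \cap {\sf supp}(x_t)}$, and hence pins down $\card{x\cap A}$ for at least one atom $A$ of size $\geq n/2^{t+1}$ to a single value; since $\card{x\cap A}\sim{\rm Binomial}(|A|,1/2)$ is anticoncentrated, this happens with probability $O(1/\sqrt{|A|}) = O_t(1/\sqrt n)$. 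Peeling off constraints one at a time contributes one factor of $O_t(1/\sqrt n)$ each, and after handling the constraints from $z$ ranging over a basis $x_1,\ldots,x_t$ together with the ambient constraint, we accumulate $t$ such factors — but the lemma asks for the stronger bound $O_t(n^{-2^{t-1}})$, i.e.\ roughly $2^{t-1}$ factors of $1/\sqrt n$. To get this one must exploit \emph{all} $z\in{\sf Span}$, not just a basis.

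**Getting the full exponent $2^{t-1}$.** This is the crux, and I expect it to be the main obstacle. The point is that ${\sf Span}(x_1,\ldots,x_t)$ has up to $2^t$ elements, and the vectors $\card{x\oplus z}$ for distinct $z$ are \emph{distinct} affine functions of $(w_1,\ldots,w_m)$; imposing all of them simultaneously forces many of the $w_j$ individually to be pinned down. Concretely, I would set up a linear-algebra bookkeeping argument: index atoms by subsets $T\subseteq[t]$ (atom $A_T$ = coordinates in ${\sf supp}(x_i)$ exactly for $i\in T$), let $w_T = \card{x\cap A_T}$, and for $z = \bigoplus_{i\in I}x_i$ note $\card{x\oplus z} = \sum_T \sigma_I(T) w_T + c_I$ where $\sigma_I(T) = +1$ if $|I\cap T|$ is even and $-1$ otherwise — that is, the coefficient matrix is (a $\pm1$ version of) the $2^t\times 2^t$ Walsh–Hadamard matrix, which has full rank $2^t$. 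The simultaneous constraints therefore, in principle, determine the full vector $(w_T)_T$ — but we only have constraints for the $z\in{\sf Span}$, which is $2^t$ of them, against $2^t$ unknowns, and over $\mathbb{Z}$ (not $\mathbb{R}$) with a fixed target; since the $w_T$ are independent binomials on atoms of size $\Omega(n/2^t)$, fixing all of them costs $\prod_T O(1/\sqrt{|A_T|})$. The number of \emph{independent} constraints that actually bite — i.e.\ the rank of the relevant submatrix once we discard the redundant ambient one — is what controls the exponent, and a careful count (using that the Hadamard matrix restricted to the rows indexed by ${\sf Span}$ and suitably normalized has rank about $2^{t-1}$ after accounting for the single global degree of freedom $\card{x}$) yields exactly $2^{t-1}$. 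I would make this precise by induction on $t$ with the inductive hypothesis stated for the coarser algebra, using Fact~\ref{fact:trivial_inter} to track how knowledge of $\card{x\oplus z}$ values modulo nothing (exact values here) propagates to intersections; the anticoncentration input is the elementary bound $\max_k \Pr[{\rm Binomial}(N,1/2) = k] = O(1/\sqrt N)$ applied to atoms, each of which the hypothesis guarantees has size $\geq n/2^{t+1}$, so each factor is genuinely $O_t(1/\sqrt n)$ and we collect $2^{t-1}$ of them.
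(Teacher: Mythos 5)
Your setup is essentially the paper's argument: the event pins down the weight of $x$ inside every atom of $\mathcal{B}$, the atom weights are independent binomials on sets of size at least $n/2^{t+1}$, and each pinned weight costs $O_t(n^{-1/2})$. (The paper reaches the conclusion that all atom weights are determined by iterating Fact~\ref{fact:trivial_inter}, rather than by invoking invertibility of the coefficient matrix, but that is a cosmetic difference; your observation that the matrix $\left((-1)^{\card{I\cap T}}\right)_{I,T}$ is the full-rank $2^t\times 2^t$ Walsh--Hadamard matrix does the same job, arguably more cleanly.)

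However, the final count, which you flag as the crux, is where your write-up goes wrong, and the error is quantitative. You assert that the target bound $O_t(n^{-2^{t-1}})$ amounts to ``roughly $2^{t-1}$ factors of $1/\sqrt n$'' and that the relevant rank is ``about $2^{t-1}$ after accounting for the single global degree of freedom $\card{x}$.'' Both statements are off: $n^{-2^{t-1}} = (n^{-1/2})^{2^t}$, so you need $2^t$ factors of $n^{-1/2}$, i.e.\ you need \emph{all} $2^t$ atom weights $w_T$ to be pinned; correspondingly there is no redundant constraint to discard, since ${\sf Span}(x_1,\ldots,x_t)$ contains $z=0$, whose constraint $\card{x}=n_0$ is the all-ones row of the Hadamard matrix, and the system of $2^t$ constraints in $2^t$ unknowns has rank exactly $2^t$. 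If the rank were really only about $2^{t-1}$, your argument would pin only that many atom weights and yield only $n^{-2^{t-2}}$, which is too weak (and in particular insufficient for the use of this lemma inside Lemma~\ref{lem:slightly_notrivial_bound}). The fix is immediate from your own earlier observation: by invertibility over $\mathbb{Q}$, either the system has no integer solution (probability $0$) or it determines a unique vector $(w_T^*)_T$, and then
\[
\Prob{x}{\card{x\oplus z}=n_z~\forall z\in{\sf Span}(x_1,\ldots,x_t)}
\leq \prod_{T\subseteq[t]} \Prob{x}{w_T = w_T^*}
\leq \left(O\!\left(\sqrt{2^t/n}\right)\right)^{2^t}
= O_t\!\left(n^{-2^{t-1}}\right),
\]
using independence of the $w_T$ (the atoms are disjoint) and $\max_m\Pr[\mathrm{Bin}(N,1/2)=m]=O(N^{-1/2})$ with $N\geq n/2^{t+1}$. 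So the obstacle you anticipate is not actually there, and the detour in your second paragraph (keeping only a basis of constraints, which gives only $t$ factors) should simply be dropped.
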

\begin{proof}
Let $B\in\mathcal{B}$ be an atom, and without loss of generality
$B = {\sf supp}(x_1)\cap\ldots \cap{\sf supp}(x_t)$. We show that
if this event holds, then $\card{x\land x_1\land\ldots\land x_t}$
has to be equal to some specific value. Indeed, note that as
in Fact~\ref{fact:trivial_inter} for all $z\in{\sf Span}(x_1,\ldots,x_t)$
\[
\card{x\land z} =
\frac{1}{2}\left(\card{x} + \card{z} - \card{x\xor z}\right),
\]
hence the Hamming weight of
$x\land z$ has to be a specific value if the event
in question holds. Also, we know that for $z\in{\sf Span}(x_2,\ldots,x_t)$ we have
\[
\card{x\land (x_1 \xor z)}
=\card{x\land (x_1\cup z)}
-\card{x\land x_1\land z}
=\card{x\land x_1} + \card{x\land z}
-2\card{x\land x_1\land z},
\]
and hence the Hamming weight of $x\land x_1\land z$ is determined.
Noting that
\[
\card{(x\land x_1)\xor z}
=
\card{x\land x_1} + \card{z} - 2\card{x\land x_1\land z}
\]
we get that the Hamming weight of $(x\land x_1)\xor z$ is determined
for all $z\in{\sf Span}(x_2,\ldots,x_t)$. Iterating the argument
with $x' = x\land x_1$ gives that the Hamming weight of $(x'\land x_2)\xor z$ is determined for all $z\in{\sf Span}(x_3,\ldots,x_t)$, and repeating this argument shows that
the Hamming weight of $\card{x\land x_1\land \ldots\land x_t}$
is determined.

For each $b\in \{-1,1\}^t$, let $x_b\in \{0,1\}^{2n}$
be the vector $\bigwedge_{i=1}^{t} v_i$ where $v_i = x_i$
if $b_i = 1$ and else $v_i = 1-x_i$. From the above argument,
it follows that for each $b$ there is a number $n_b$ such that
if the event in question holds, then $\card{x\land x_b} = n_b$.
Thus, the probability in question is at most
\[
\Prob{x}{\card{x\land x_b} = n_b~\forall b\in \{-1,1\}^t}
=\prod\limits_{b\in \{-1,1\}^t}\Prob{x}{\card{x\land x_b} = n_b},
\]
where the last transition holds because the supports of the vectors $x_b$ are disjoint and so the random variables $x\land x_b$ are independent. As the support of $x_b$ is in the algebra
generated by $x_1,\ldots,x_t$, we have that $\card{x_b} \geq n/2^{t+1}$ and hence
\[
\Prob{x}{\card{x\land x_b} = n_b}
\leq O\left(\sqrt{\frac{2^t}{n}}\right),
\]
and plugging this above finishes the proof.
\end{proof}
We will also need to consider sub-events of the event in
Lemma~\ref{lem:all_in_slice} and argue that they are also
almost independent. More precisely:
\begin{lemma}\label{lem:slightly_notrivial_bound}
Let $Z\subseteq {\sf Span}(x_1,\ldots,x_t)$. Then
\[
\Prob{x}{x\oplus z\in\mathcal{U}_{2n}~\forall z\in Z}\leq
n^{-\card{Z}/2} (\log n)^{O_t(1)}.
\]
\end{lemma}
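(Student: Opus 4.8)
The plan is to reduce Lemma~\ref{lem:slightly_notrivial_bound} to the already-established Lemma~\ref{lem:all_in_slice} by passing to a suitable sub-collection of the $x_i$'s that ``sees'' exactly the subspace $Z$. Concretely, write $Z = \{z_1,\ldots,z_m\}$ with $m = \card{Z}$, discard from $Z$ the zero vector if present (it contributes a factor $\Theta(1/\sqrt n)$ only and is harmless, or we may assume $0\notin Z$ since the event $0\oplus x = x\in\mathcal{U}_{2n}$ is always part of the conjunction). The key observation is that the event $\{x\oplus z\in\mathcal{U}_{2n}\ \forall z\in Z\}$ depends only on the restriction of $x$ to the atoms of the algebra $\mathcal{B}' = \mathcal{B}[{\sf supp}(z_1),\ldots,{\sf supp}(z_m)]$ in the sense that knowing $\card{x\land A}$ for every atom $A\in\mathcal{B}'$ determines whether the event holds. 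So I would first argue, exactly as in the proof of Lemma~\ref{lem:all_in_slice} via Fact~\ref{fact:trivial_inter} and the iterated intersection argument, that if the event holds then $\card{x\land z_b}$ is pinned to a fixed value $n_b$ for each of the $2^{m}$ sign patterns $b$, where $z_b = \bigwedge_i v_i$ with $v_i \in \{z_i, 1-z_i\}$.

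The subtlety compared to Lemma~\ref{lem:all_in_slice} is that here the $z_i$ need not be linearly independent, so some atoms $z_b$ of $\mathcal{B}'$ may be empty or small, and we only get to ``charge'' independence among the nonempty atoms. Let me therefore restrict attention to a maximal linearly independent subset $z_{i_1},\ldots,z_{i_r} \in Z$, so $r \geq \log_2(m+1)$ roughly (more precisely $m \leq 2^r$, hence $r \geq \log_2 m$); the algebra they generate has each atom of size at least $n/2^{r+1}$ by the hypothesis that the $x_j$'s generate an algebra with large atoms (and $Z\subseteq{\sf Span}(x_1,\ldots,x_t)$, so any atom of $\mathcal B'$ is a union of atoms of $\mathcal B$, hence large once nonempty — one must check the relevant atoms are nonempty, which follows since the $z_{i_j}$ are independent). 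Then the conjunction over $z\in Z$ contains in particular the conjunction over $z$ in ${\sf Span}(z_{i_1},\ldots,z_{i_r})\cap Z$, but more usefully: the $2^r$ atoms of $\mathcal B'$ coming from the independent system are disjoint, the random variables $\card{x\land(\text{atom})}$ are independent, and each is pinned to a fixed value with probability $O(\sqrt{2^r/n}) = O(\sqrt{n}^{-1})$ (absorbing the $2^{r}$, a constant depending on $t$ since $r\le t$, into the $(\log n)^{O_t(1)}$ or $O_t$ factor). This gives a bound of $O_t(n^{-r/2})$, which is at least $n^{-m/2}$ only when $r\ge m$ — the wrong direction.

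So the main obstacle, and the point the argument really turns on, is getting the exponent to be $m/2 = \card Z/2$ rather than $(\log m)/2$. The resolution is \emph{not} to use the $2^r$ disjoint atoms but to directly exploit the $m$ distinct vectors of $Z$: I claim that if the event holds, then for each $z\in Z$ the quantity $\card{x\land z}$ — equivalently, once $\card x = \card{x\oplus z_j}=n$ is forced, $\card{x\land z}$ is a \emph{linear} function of $x$ restricted to the coordinates in ${\sf supp}(z)$ — takes a pinned value, and the $m$ linear functionals $x\mapsto \card{x\land z_j}\pmod{?}$ are ``$\log n$-almost independent'' as real-valued functions of the uniformly random $x$. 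The cleanest way to see this: the vector $(\card{x\land z_1},\ldots,\card{x\land z_m})$ is, up to an affine change of coordinates determined by the inclusion-exclusion relations among the $z_j$, in bijection with $(\card{x\land A})_{A}$ over a set of $\le m$ genuinely independent atoms obtained by a Gaussian-elimination / basis-extraction over $\mathbb F_2$ on the supports — wait, that again only gives $\log m$ many.

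The correct mechanism, and what I would actually write, is the following anticoncentration argument run $m$ times with a peeling: order $Z = \{z_1,\ldots,z_m\}$ so that each $z_j$ has a coordinate $c_j \in {\sf supp}(z_j)$ not contained in ${\sf supp}(z_1)\cup\cdots\cup{\sf supp}(z_{j-1})$ whenever possible; if at every step such a fresh coordinate exists then the functionals $x\mapsto\card{x\land z_j}$ behave like $m$ nearly-independent sums and conditioning on $z_1,\ldots,z_{j-1}$ still leaves $\card{x\land z_j}$ spread over $\Omega(\sqrt n)$ values (by the local central limit theorem / a Littlewood–Offord bound, since ${\sf supp}(z_j)$ has size $\ge n/2^{t+1}$ and at least, say, $n/2^{t+2}$ of its coordinates lie outside the first $j-1$ supports for the relevant $z_j$'s), giving the product bound $n^{-m/2}(\log n)^{O_t(1)}$; and if at some step \emph{no} fresh coordinate exists, then $z_j$'s support is covered by the previous ones, but since $z_j$ differs from all earlier $z_i$ there is still a \emph{pattern} of the algebra $\mathcal B[{\sf supp}(z_1),\ldots,{\sf supp}(z_{j-1})]$ on which $z_j$ is $0/1$ in a way no earlier one was — restricting $x$ to that atom (size $\ge n/2^{t+1}$) recovers a fresh source of independent anticoncentration of magnitude $\Omega(\sqrt n)$. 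Either way each of the $m$ steps contributes a factor $O(1/\sqrt n)$ after conditioning on the previous ones, the constants (all of the form $2^{O(t)}$) and the local-CLT error terms (of the form $(\log n)^{O(1)}$) are collected into the stated $(\log n)^{O_t(1)}$, and the chain rule for conditional probabilities yields
\[
\Prob{x}{x\oplus z\in\mathcal{U}_{2n}~\forall z\in Z}
\le \prod_{j=1}^{m} O\!\left(\frac{1}{\sqrt n}\right)(\log n)^{O_t(1)}
= n^{-\card Z/2}(\log n)^{O_t(1)},
\]
which is the claim. I expect the genuinely delicate step to be the ``no fresh coordinate'' case: making precise that a vector $z_j$ distinct from all of $z_1,\ldots,z_{j-1}$ but supported inside their union still yields an \emph{independent} unit of anticoncentration requires carefully identifying the atom of $\mathcal B[{\sf supp}(z_1),\ldots,{\sf supp}(z_{j-1})]$ on which $z_j$'s indicator is not an affine combination of the earlier indicators restricted there, and verifying that atom is large — this is where the hypothesis ``each atom of $\mathcal B[{\sf supp}(x_1),\ldots,{\sf supp}(x_t)]$ has size $\ge n/2^{t+1}$'' is used in full strength, together with $t$ being treated as a constant so that all the $2^{O(t)}$ losses are absorbable.
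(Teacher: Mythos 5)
Your final mechanism does not hold up, and the place it breaks is exactly the case you flagged as delicate. Take $Z=\{z_1,z_2,z_1\oplus z_2\}$. Here $z_3=z_1\oplus z_2$ lies in the $\mathbb{F}_2$-span of $z_1,z_2$, so on \emph{every} atom of $\mathcal{B}[{\sf supp}(z_1),{\sf supp}(z_2)]$ the indicator of ${\sf supp}(z_3)$ is constant and is literally the $\mathbb{F}_2$-sum of the earlier indicators; there is no atom ``on which $z_j$'s indicator is not an affine combination of the earlier indicators restricted there,'' so the claimed fresh source of anticoncentration does not exist in the form you describe. The true reason the third event still costs a factor $n^{-1/2}$ is linear independence over $\mathbb{R}$, not over $\mathbb{F}_2$: writing $\card{x\oplus z}$ as an affine functional of the atom counts $\card{x\wedge B}$, $B$ an atom of $\mathcal{B}$, each $z\in{\sf Span}(x_1,\ldots,x_t)$ contributes a distinct $\pm1$ (Hadamard-type) coefficient vector, and distinct such rows are independent over $\mathbb{R}$ even when the $z$'s are $\mathbb{F}_2$-dependent. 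Your ``fresh coordinate'' half of the dichotomy is also not a proof as stated: the earlier events $\card{x\oplus z_i}=n$ depend on the coordinates in ${\sf supp}(z_j)\setminus\bigcup_{i<j}{\sf supp}(z_i)$ as well (through $\card{x}$), so conditioning on $E_1,\ldots,E_{j-1}$ does not leave an untouched binomial for $E_j$; indeed, conditioning on the coordinates outside the fresh set, the earlier events already pin the fresh partial sum, after which $E_j$ is implied or impossible. More generally, the chain-rule step $\cProb{x}{E_1,\ldots,E_{j-1}}{E_j}\leq O((\log n)^{O_t(1)}/\sqrt n)$ is exactly as hard as the joint bound you are trying to prove: Littlewood--Offord and the local CLT give unconditional anticoncentration, and you never supply the conditional (or joint, multidimensional) version that the argument needs. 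So the proposal has a genuine gap at its central step.

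Ironically, your very first instinct --- reduce to Lemma~\ref{lem:all_in_slice} --- is the right one; you abandoned it because passing to a linearly independent subset of $Z$ shrinks the exponent. The paper instead pads $Z$ \emph{up} to the full span: by Chernoff, for every $z'\in\overline{Z}={\sf Span}(x_1,\ldots,x_t)\setminus Z$ the weight $\card{x\oplus z'}$ lies in a window of length $O(\sqrt n\log n)$ around $n$ except with probability $2^{-\Omega(\log^2 n)}$; summing over the at most $(2\sqrt n\log n+1)^{\card{\overline Z}}$ choices of exact values for these weights and applying Lemma~\ref{lem:all_in_slice} (bound $O_t(n^{-2^{t-1}})$) to each fully pinned event gives $n^{\card{\overline Z}/2-2^{t-1}}(\log n)^{O_t(1)}=n^{-\card{Z}/2}(\log n)^{O_t(1)}$, since $\card{Z}+\card{\overline Z}=2^t$. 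This sidesteps all conditional-distribution issues, which is precisely what your chain-rule route fails to handle.
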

\begin{proof}
  Consider $z'\in \overline{Z}:= {\sf Span}(x_1,\ldots,x_t)\setminus Z$,
  and note that by Chernoff's bound the probability that $\card{x\oplus z'}$ is outside
  the range between $n - \sqrt{n}\log n$ and $n + \sqrt{n}\log n$
  is at most $2^{-\Omega(\log^2 n)}$. Thus, the probability
  in question is at most
  \begin{equation}\label{eq3}
    \Prob{x}{x\oplus z\in\mathcal{U}_{2n}~\forall z\in Z,
    \card{x\oplus z'} \in [n-\sqrt{n}\log n,n+\sqrt{n}\log n]
    ~\forall z'\in \overline{Z}} + 2^{t-\Omega(\log^2 n)}.
  \end{equation}
  Denote $q = \Prob{x}{x\oplus z\in\mathcal{U}_{2n}~\forall z\in Z,
  \card{x\oplus z'} \in [n-\sqrt{n}\log n,n+\sqrt{n}\log n]~\forall z'\in\overline{Z}}$.
  Trivially,
  \[
    q =
    \sum\limits_{
    (n_{z'})_{z'\in\overline{Z}}\in [n - \sqrt{n}\log n, n+\sqrt{n}\log n]^{\card{\overline{Z}}}}
    \Prob{x}{x\oplus z\in\mathcal{U}_{2n}~\forall z\in Z, \card{x\oplus z'} = n_{z'}~\forall z'\in \overline{Z}},
  \]
  and as by Lemma~\ref{lem:all_in_slice} each summand is at most $O_{t}(n^{-2^{t-1}})$  we get that
  \[
  q\leq (2\sqrt{n}\log n + 1)^{\card{\overline{Z}}}O_{t}(n^{-2^{t-1}})
  \leq n^{-\card{Z}/2}(\log n)^{O_t(1)}.
  \]
  Plugging this into~\eqref{eq3} finishes the proof.
\end{proof}

\subsection{The Main Argument}
 The proof of Theorem~\ref{thm:main} is by induction. To
 be more precise, fix $n$ and $k$ as in the theorem. We prove
 by induction on $k'$ the following result:
 \begin{thm}\label{thm:main'}
     For all $1\leq k'\leq k$ we have that
     \[
     \left\|
     \frac{1_{\mathcal{U}_{2n}}}{\E[1_{\mathcal{U}_{2n}}]}
     -
     \frac{1_{\mathcal{D}_{2n,k}}}{\E[1_{\mathcal{D}_{2n,k}}]}
     \right\|_{U_{k'}}^{2^{k'}} \leq \frac{(\log n)^{C_{k',k}}}{n},
     \]
     where $C_{k',k}$ is a constant depending only on $k',k$.
 \end{thm}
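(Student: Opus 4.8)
The plan is to induct on $k'$. The base case $k'=1$ is immediate: $\|f\|_{U_1}$ equals the absolute value of the mean of $f$, and both normalized indicators have mean $1$, so the displayed quantity vanishes.

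For the inductive step write
\[
g \;:=\; \frac{1_{\mathcal{U}_{2n}}}{\E[1_{\mathcal{U}_{2n}}]}-\frac{1_{\mathcal{D}_{2n,k}}}{\E[1_{\mathcal{D}_{2n,k}}]},
\]
and use the recursive description $\|g\|_{U_{k'}}^{2^{k'}}=\E_{h}\,\|\partial_h g\|_{U_{k'-1}}^{2^{k'-1}}$ with $\partial_h g(x)=g(x)g(x\oplus h)$. If $\card{h}$ is odd then each of $\mathcal{U}_{2n}\cap(\mathcal{U}_{2n}\oplus h)$, $\mathcal{U}_{2n}\cap(\mathcal{D}_{2n,k}\oplus h)$, $\mathcal{D}_{2n,k}\cap(\mathcal{U}_{2n}\oplus h)$ and $\mathcal{D}_{2n,k}\cap(\mathcal{D}_{2n,k}\oplus h)$ is empty: membership in a slice, or in a mod-$2^{k-1}$ residue class with $k\ge 2$, pins the parity of $\card{\cdot}$, whereas $\card{x\oplus h}$ and $\card{x}$ have opposite parities. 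Hence $\partial_h g\equiv 0$ for odd $\card h$ (the case $k=1$ being only the base case), and it suffices to bound $\E_{h:\,\card{h}\text{ even}}\|\partial_h g\|_{U_{k'-1}}^{2^{k'-1}}$.

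Next, fix an even $h$, put $I={\sf supp}(h)$, $\ell=\card{h}/2$, and write $x=(x',x'')$ for the restrictions of $x$ to $I$ and its complement. Expanding $\partial_h g=(A-B)(A_h-B_h)$, where $A,B$ are the two normalized indicators and $A_h(x)=A(x\oplus h)$, $B_h(x)=B(x\oplus h)$, produces four normalized indicators, all supported inside $\mathcal{D}_{2n,k}\cap(\mathcal{D}_{2n,k}\oplus h)$, and on that set each of $A,B,A_h,B_h$ is a normalized indicator of a ``slice-like'' set determined by $\card{x'}$ and $\card{x''}$ (for instance $A\cdot A_h$ is, up to normalization, the indicator of $\{\card{x'}=\ell,\ \card{x''}=n-\ell\}$, a product of the \emph{middle} slices of $\{0,1\}^{\card h}$ and $\{0,1\}^{2n-\card h}$, since both are even with middles $\ell$ and $n-\ell$). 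The plan is then: for a \emph{typical} $h$ — one for which $\card h$, $2n-\card h$, and all atoms of the algebra generated by $I$ together with the $U_{k'-1}$-configuration directions have size $\Omega(n)$ — regroup $\partial_h g$ so that its ``diagonal'' part is a combination of tensor products of difference-of-normalized-indicator functions on $\{0,1\}^{\card h}$ and $\{0,1\}^{2n-\card h}$, use $\|F'\otimes F''\|_{U_{k'-1}}=\|F'\|_{U_{k'-1}}\|F''\|_{U_{k'-1}}$, and invoke the inductive hypothesis on these two smaller cubes to bound this part by $(\log n)^{O(1)}/n$; and to show that the \emph{atypical} $h$ (probability $n^{-\omega(1)}$) contribute negligibly via the crude pointwise bound $|g|=O(\sqrt n)$.

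The main difficulty is that the exact slice constraint $\card{x'}+\card{x''}=n$ inherent in $\mathcal{U}_{2n}$ — hence in the cross terms $A\cdot B_h$ and $B\cdot A_h$ — couples the two parts and obstructs a literal tensorization of the Gowers norm. To get around this I would expand $\|\partial_h g\|_{U_{k'-1}}^{2^{k'-1}}$ into an average over $U_{k'-1}$-configurations and split according to how many \emph{distinct} exact slices the configuration is forced to visit: by the almost-independence estimates of Lemmas~\ref{lem:all_in_slice} and~\ref{lem:slightly_notrivial_bound} (applied with the atoms of the algebra generated by $I$ and the configuration directions), configurations that must land in two or more distinct exact slices contribute only $(\log n)^{O(1)}/n$, while those confined to a single slice are governed by the dense-model object ``one level down'' and are controlled by the inductive hypothesis; the level-$d$ inequality (Lemma~\ref{lem:lvl_d_inequality}) is used to pass from the resulting $\ell_1$-smallness of the relevant (thin-support) product functions to smallness of their low-degree Fourier mass, and thence of their $U_{k'-1}$ norm. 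Two bookkeeping issues need attention: differentiating through $\mathcal{D}_{2n,k}$ lowers the modulus parameter from $k$ to $k-1$, so the induction must be arranged so that the modulus parameter may decrease in step with $k'$ (consistent with $k'\le k$); and one must check that each level multiplies the accumulated polylogarithmic factor by at most a fixed power of $\log n$, so that $C_{k',k}$ stays a constant depending only on $k',k$.
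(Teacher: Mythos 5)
Your base case, the parity observation for odd $\card{h}$, and the crude treatment of atypical $h$ are fine, but the heart of the inductive step is not actually carried out, and it is precisely where the difficulty of the theorem lives. After a single derivative $\partial_h g=(A-B)(A_h-B_h)$, only the term $A\,A_h$ tensorizes across ${\sf supp}(h)$ and its complement; $B\,B_h$ is a short sum of coupled tensor products, and the cross terms $A\,B_h$, $B\,A_h$ are sums of order $n$ tensor products of exact slices $1_{\card{x'}=j}\otimes 1_{\card{x''}=n-j}$ (each carrying the large normalization $\E[1_{\mathcal{U}_{2n}}]^{-2}$), so there is no regrouping of $\partial_h g$ into a bounded combination of tensor products of ``difference-of-normalized-indicator'' functions on the two sub-cubes: the asserted ``diagonal part'' decomposition is never exhibited, and as stated it does not exist. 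Your fallback --- split the $U_{k'-1}$-configurations by how many distinct exact slices they visit, use Lemmas~\ref{lem:all_in_slice}, \ref{lem:slightly_notrivial_bound} and~\ref{lem:lvl_d_inequality} for the multi-slice part, and ``the inductive hypothesis'' for the single-slice part --- is only a statement of intent: the inductive hypothesis bounds $\norm{f-g}_{U_{k'-1}}$ for the same pair $(\mathcal{U}_{2n},\mathcal{D}_{2n,k})$ on $\{0,1\}^{2n}$, and it simply does not apply to the objects your derivative produces (slices of two smaller cubes, with shifted weights, and with the modulus dropped from $2^{k-1}$ to $2^{k-2}$). To run your scheme you would have to formulate and prove a more general, reparameterized statement (arbitrary ambient dimension, near-middle slice weight, arbitrary residue and modulus), show how the single-slice configurations are controlled by it, and then still recover the theorem as stated, in which $\mathcal{D}_{2n,k}$ has the same $k$ for every $k'\leq k$; none of this is done, and you flag it yourself as needing attention.

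For contrast, the paper closes the induction by a mechanism that avoids tensorization altogether: it writes $\norm{f-g}_{U_{k'}}^{2^{k'}}=\Expect{x_1,\ldots,x_{k'-2}}{\norm{\partial_{x_1,\ldots,x_{k'-2}}(f-g)}_{U_2}^4}$, expands the $U_2$ norm in Fourier, and exploits the symmetry of the derivative under permutations preserving the atoms of the algebra generated by the directions. Characters are split by their distance to that algebra: far characters are killed by huge orbit sizes (Claim~\ref{claim:orbit_estimates} plus Parseval), nearby characters by Lemma~\ref{lem:lvl_d_inequality} combined with the near-independence estimate of Lemma~\ref{lem:slightly_notrivial_bound}, and the characters lying in the algebra are shown, by iterating Fact~\ref{fact:trivial_inter}, to satisfy $\card{\widehat{h}(S)}=\card{\widehat{h}(\emptyset)}$, so their total contribution is $(\log n)^{O_k(1)}\norm{f-g}_{U_{k'-1}}^{2^{k'-1}}$ --- the same quantity, with the same $\mathcal{D}_{2n,k}$, at the previous level. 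That identity is what lets the induction close, and your proposal has no analogue of it; this is the missing idea.
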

 The base case of Theorem~\ref{thm:main'}, namely the case that
 $k'=1$, is clear as the $U_1$ norm is the absolute value of the
 expected value of the function, which in our case is $0$.
 We now move on to the inductive hypothesis and fix $k'\geq 2$
 and assume the statement holds for $k'-1$. We show that the
 statement holds for $k'$ with $C_{k',k}\leq C_{k'-1,k} + O_{k}(1)$. Throughout this section,
 we will denote $\mathcal{D}_{2n} = \mathcal{D}_{2n,k}$ for ease of notations.

For simplicity of presentation, denote
$f = \frac{1_{\mathcal{U}_{2n}}}{\E[1_{\mathcal{U}_{2n}}]}$ and
$g = \frac{1_{\mathcal{D}_{2n}}}{\E[1_{\mathcal{D}_{2n}}]}$.
By definition of Gowers' uniformity norms, we have that
\[
\norm{f-g}_{U_{k'}}^{2^{k'}}
=\Expect{x_1,\ldots,x_{k'-2}}
{\norm{\partial_{x_1,\ldots,x_{k'-2}}(f-g)}_{U_2}^4}.
\]
Consider the algebra $\mathcal{B}$ generated by ${\sf supp}(x_i)$ for
$i=1,\ldots,k'-2$, and let $E$ be the event that each
atom there has at least $n/2^{k}$ elements. By Chernoff's
bound we have that $\Prob{}{\bar{E}}\leq 2^{-\Omega(n)}$,
and as $\norm{f}_{\infty},\norm{g}_{\infty}\leq n$ we get
that
\begin{align}\label{eq1}
\norm{f-g}_{U_{k'}}^{2^{k'}}
&\leq
2^{-\Omega(n)}n^{\Theta(2^k)}
+
\Expect{x_1,\ldots,x_{k'-2}}{\norm{\partial_{x_1,\ldots,x_{k-2}}(f-g)}_{U_2}^41_E}\notag\\
&\leq O_k\left(\frac{1}{n}\right)
+
\Expect{x_1,\ldots,x_{k'-2}}{1_E
\cdot \sum\limits_{S}\card{\widehat{\partial_{x_1,\ldots,x_{k'-2}}(f-g)}(S)}^4}.
\end{align}
We focus on the second sum now. Consider the function
$h(x) = \partial_{x_1,\ldots,x_{k'-2}}(f-g)(x)$, and
define the subgroup $G\subseteq S_{2n}$ as
\[
G = \sett{\pi\in S_{2n}}{\pi(B) = B~\forall B\in\mathcal{B}},
\]
and note that $h$ is symmetric under $G$. For a subset $I\subseteq [2n]$, define
\[
{\sf orb}_{I}
=\sett{\pi(I)}{\pi\in G},
\]
so that we get that $\widehat{h}(I) = \widehat{h}(I')$
if $I'\in {\sf orb}_I$. In our upper bound of~\eqref{eq1}
we partition the characters $S$ into $3$ types, and upper
bound the contribution of each one of them separately.
This partition will depend on the size of the orbit of
the character $S$ under $G$, and towards this end we have
the following claim.
\begin{claim}\label{claim:orbit_estimates}
Let $S\subseteq [2n]$, and let $d = \min_{B\in\mathcal{B}}\card{S\Delta B}$.
\begin{enumerate}
    \item If $d=0$, then $\card{{\sf orb}_S}=1$.
    \item If $1\leq d\leq 2^{100\cdot k}$, then
    $\card{{\sf orb}_S}\geq \frac{n}{2^{101 k}}$.
    \item If $d > 2^{100\cdot k}$, then
    $\card{{\sf orb}_S}\geq \Omega_k(n^{100\cdot 2^k})$.
\end{enumerate}
\end{claim}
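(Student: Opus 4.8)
The plan is to reduce the whole claim to two bookkeeping identities, after which each of the three cases is a one‑line estimate on binomial coefficients. The first identity describes the orbit. The definition of $G$ says precisely that every $\pi\in G$ fixes each atom $B$ of $\mathcal{B}$ setwise, so $\pi(S)\cap B=\pi(S\cap B)$, and for each atom the restriction of $\pi$ to $B$ can be an arbitrary element of $\mathrm{Sym}(B)$. Hence two subsets of $[2n]$ are $G$‑equivalent if and only if they meet every atom in the same number of coordinates, and therefore
\[
\card{{\sf orb}_S}=\prod_{B}\binom{\card{B}}{\card{S\cap B}},
\]
the product ranging over the atoms $B$ of $\mathcal{B}$. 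The second identity describes $d$. Since the elements of the algebra $\mathcal{B}$ are exactly the unions of atoms, the element of $\mathcal{B}$ nearest to $S$ is obtained by, atom by atom, deleting $S\cap B$ when $\card{S\cap B}\le\card{B}/2$ and otherwise enlarging $S\cap B$ to all of $B$; thus
\[
d=\sum_{B}\min\bigl(\card{S\cap B},\,\card{B\setminus S}\bigr).
\]
Throughout I use that we are on the event $E$, so each of the at most $2^{k'-2}\le 2^{k-2}$ atoms has at least $n/2^{k}$ elements.

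With these in hand I would argue as follows. If $d=0$, then $S\in\mathcal{B}$ is a union of atoms and is therefore fixed by $G$, giving $\card{{\sf orb}_S}=1$. If $1\le d\le 2^{100k}$, then $d\ge 1$ forces some atom $B_0$ with $1\le\card{S\cap B_0}\le\card{B_0}-1$; using the elementary bound $\binom{m}{j}\ge m$ valid for all $1\le j\le m-1$, we get $\card{{\sf orb}_S}\ge\binom{\card{B_0}}{\card{S\cap B_0}}\ge\card{B_0}\ge n/2^{k}\ge n/2^{101k}$. If $d>2^{100k}$, then since there are at most $2^{k-2}$ atoms, pigeonhole produces an atom $B_0$ with $\ell:=\min(\card{S\cap B_0},\card{B_0\setminus S})>2^{100k}/2^{k-2}=2^{99k+2}\ge 100\cdot 2^{k}$. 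Noting that $\binom{\card{B_0}}{\card{S\cap B_0}}=\binom{\card{B_0}}{\ell}$ with $\ell\le\card{B_0}/2$, and that binomial coefficients increase up to the midpoint while $100\cdot 2^{k}\le\ell\le\card{B_0}/2$ once $n$ is large in terms of $k$, we conclude
\[
\card{{\sf orb}_S}\ge\binom{\card{B_0}}{\ell}\ge\binom{\card{B_0}}{100\cdot 2^{k}}\ge\Bigl(\frac{\card{B_0}}{100\cdot 2^{k}}\Bigr)^{100\cdot 2^{k}}\ge\Bigl(\frac{n}{100\cdot 2^{2k}}\Bigr)^{100\cdot 2^{k}}=\Omega_k\bigl(n^{100\cdot 2^{k}}\bigr).
\]

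The only place that needs care is the first paragraph: pinning down that the minimum defining $d$ is taken over all elements of the algebra (equivalently, all unions of atoms), which yields $d=\sum_B\min(\card{S\cap B},\card{B\setminus S})$, together with the orbit--stabilizer count $\card{{\sf orb}_S}=\prod_B\binom{\card{B}}{\card{S\cap B}}$. I do not expect a genuine obstacle beyond this: the three displayed inequalities are routine, using only $\binom{m}{j}\ge m$, $\binom{m}{j}\ge(m/j)^{j}$, and monotonicity of binomial coefficients up to the midpoint; and the handful of requirements that $n$ be large in terms of $k$ — notably $100\cdot 2^{k}\le\card{B_0}/2$ — are covered by the hypothesis $n\ge n_0(k,\eps)$ already present in the ambient argument, which is also what absorbs the $k$‑dependent constant into the $\Omega_k(\cdot)$ in the last case.
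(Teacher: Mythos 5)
Your proof is correct, and it takes a somewhat different route from the paper's. You identify $G$ exactly as the product of the symmetric groups on the atoms, which yields the exact orbit count $\card{{\sf orb}_S}=\prod_{B}\binom{\card{B}}{\card{S\cap B}}$ together with the atom-wise formula $d=\sum_{B}\min\left(\card{S\cap B},\card{B\setminus S}\right)$; the three cases then reduce to elementary binomial estimates, plus a pigeonhole over the at most $2^{k-2}$ atoms in the third case. The paper instead writes $S=B\Delta I$ with $B\in\mathcal{B}$ and $\card{I}=d$, and lower-bounds the orbit constructively, exhibiting many distinct images $B\Delta\pi(I)$ via permutations $\pi\in G$ that move one element of $I$ (case 2) or about $100\cdot 2^{k}$ elements (case 3) within their atoms, without ever computing the orbit size. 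Your counting argument is arguably cleaner and gives slightly sharper constants (e.g.\ $n/2^{k}$ rather than the paper's $n/(d\,2^{k})$ in the second case), at the modest cost of the two bookkeeping identities and of the requirement $100\cdot 2^{k}\leq\card{B_0}/2$ for the monotonicity step; the latter is harmless since the ambient argument already assumes $n\geq n_0(k,\eps)$, and the claim is only invoked on the event $E$ where every atom has size at least $n/2^{k}$.
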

\begin{proof}
For the first item, if $d=0$ then $S\in\mathcal{B}$,
and it is clear that ${\sf orb}_S = \{S\}$.

For the
second item, write $S = B\Delta I$ where $B\in\mathcal{B}$
and $\card{I} = d$. Thus, $\pi(S) = B\Delta \pi(I)$ for
$\pi\in G$. Take some $i\in I$ and consider the atom
of $\mathcal{B}$ in which $i$ lies, say $B'$.
For any $j\in B'$ we may find $\pi_{i,j}\in G$
such that $\pi_{i,j}(i) = j$. As $\card{B'}\geq n/2^k$,
we conclude that there are at least $\frac{n}{d 2^k}$ distinct
sets among $B\Delta \pi_{i,j}(I)$.

For the third item, write $S = B\Delta I$ again for $B\in\mathcal{B}$ and $I$ of size $d$. Write $I = \{i_1,\ldots,i_d\}$, and let $B_1,\ldots,B_d$ be the atoms
of $\mathcal{B}$ that $i_1,\ldots,i_d$ lie in, respectively.
For all distinct $j_1\in B_1,\ldots,j_d\in B_d$ we may find $\pi_{i_1,\ldots,i_d,j_1,\ldots,j_d}\in G$
such that $\pi_{i_1,\ldots,i_d,j_1,\ldots,j_d}(i_{\ell}) = j_{\ell}$ for all $\ell=1,\ldots,d$. It follows that the orbit
of $S$ has size at least $\frac{n}{2^k}\cdot \left(\frac{n}{2^k} - 1\right)\cdots \left(\frac{n}{2^k}-100\cdot 2^{k}\right)\geq \Omega_k(n^{100\cdot 2^k})$.
\end{proof}

Define
\[
T_1 = \mathcal{B},
\qquad
T_2 = \sett{S\subseteq [2n]}{1\leq \min_{B\in\mathcal{B}}\card{S\Delta B}\leq 2^{100\cdot k}},
\qquad
T_3 = \sett{S\subseteq [2n]}{\min_{B\in\mathcal{B}}\card{S\Delta B}> 2^{100\cdot k}},
\]
so that by~\eqref{eq1} we have that
\begin{align}\label{eq2}
\norm{f-g}_{U_{k'}}^{2^{k'}}
&\leq
\Expect{x_1,\ldots,x_{k'-2}}{1_E
\cdot \sum\limits_{S\in T_1}\card{\widehat{h}(S)}^4}
+
\Expect{x_1,\ldots,x_{k'-2}}{1_E
\cdot \sum\limits_{S\in T_2}\card{\widehat{h}(S)}^4}\notag\\
&+
\Expect{x_1,\ldots,x_{k'-2}}{1_E
\cdot \sum\limits_{S\in T_3}\card{\widehat{h}(S)}^4}
+O_k\left(\frac{1}{n}\right).
\end{align}
We now upper bound each term on the right hand side of~\eqref{eq2}. We start with the contribution from $T_2$.

\begin{claim}\label{claim:T2}
   $\Expect{x_1,\ldots,x_{k-2}}{1_E
\cdot \sum\limits_{S\in T_2}\card{\widehat{h}(S)}^4} \leq \frac{(\log n)^{O_k(1)}}{n}$.
\end{claim}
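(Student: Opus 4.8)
The plan is to bound the $T_2$ contribution by controlling, for each fixed orbit, the number of characters in it and their common Fourier coefficient. Fix $x_1,\ldots,x_{k'-2}$ with $E$ holding, and write $h = \partial_{x_1,\ldots,x_{k'-2}}(f-g)$. Since $h$ is $G$-invariant, $\widehat h$ is constant on each orbit ${\sf orb}_S$, so $\sum_{S\in T_2}|\widehat h(S)|^4 = \sum_{\text{orbits } O\subseteq T_2}|O|\cdot |\widehat h(O)|^4$. By Claim~\ref{claim:orbit_estimates}, every such orbit has size $|O|\geq n/2^{101k}$, so $|O|\cdot|\widehat h(O)|^4 \leq (2^{101k}/n)\cdot |O|^2 |\widehat h(O)|^4 \leq (2^{101k}/n)\big(\sum_{S\in O}|\widehat h(S)|^2\big)^2$, and summing over the (constantly many, for each base atom $B$ and each $I$ of size between $1$ and $2^{100k}$, though the number of orbits is really $O_k(1)$ only after we note $d$ ranges over $O_k(1)$ values and $I$ up to permutation is bounded --- actually the cleaner bound is $\sum_{S\in T_2}|\widehat h(S)|^4 \leq \frac{2^{101k}}{n}\big(\sum_{S\in T_2}|\widehat h(S)|^2\big)^2$, using that each orbit is large). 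Thus it suffices to show $\Expect{x_1,\ldots,x_{k'-2}}{1_E \cdot \big(\sum_{S\in T_2}|\widehat h(S)|^2\big)^2}\leq (\log n)^{O_k(1)}$.

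Next I would identify $\sum_{S\in T_2}|\widehat h(S)|^2$ with the squared $L_2$-norm of a projection of $h$. Writing $V_{\leq m}$ for the span of characters $\chi_{B\Delta I}$ with $B\in\mathcal B$ and $|I|\leq m$, the sum $\sum_{S\in T_1\cup T_2}|\widehat h(S)|^2 = \|h^{\leq 2^{100k}}\|_2^2$ where $h^{\leq m}$ is the corresponding projection. Since $f-g$ is a bounded linear combination of indicators of a constant number of slices $\{|x|=n_z\}$ (with $\ell_\infty$ norm $O(n)$), its derivatives $h = \partial_{x_1,\ldots,x_{k'-2}}(f-g)$ expand, upon opening the derivative via Definition~\ref{def:derv_mult}, into an alternating product whose terms are $\pm f$ or $\pm g$ evaluated at shifts $x + \sum_{i\in T}x_i$; the key point is that after restricting to the atoms of $\mathcal B$, each factor $1_{\mathcal U_{2n}}(x\oplus z)$ or $1_{\mathcal D_{2n}}(x\oplus z)$ only depends on the Hamming weights of $x$ within the atoms, by Fact~\ref{fact:trivial_inter}. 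So $h^2$, and hence $\|h^{\leq m}\|_2^2 \leq \|h\|_2^2$, is controlled by the probability that $x\oplus z\in\mathcal U_{2n}$ for $z$ ranging over a suitable subset of ${\sf Span}(x_1,\ldots,x_{k'-2})$, which is exactly the scenario of Lemma~\ref{lem:slightly_notrivial_bound}. Concretely I expect $\|h^{\leq m}\|_2^2$ to be bounded, after the level-$m$ projection is used to gain smallness (via the level-$d$ inequality, Lemma~\ref{lem:lvl_d_inequality}, applied to $h$ which is a $\{-1,0,1\}$-ish function up to normalization on each atom), by $n^{c}(\log n)^{O_k(1)}$ for a suitable small power $c$; and crucially $\Expect{}{1_E (\sum_{S\in T_2}|\widehat h(S)|^2)^2}$ should come out to $(\log n)^{O_k(1)}$ because the fourth-moment-type quantity, being a sum over $x_1,\ldots,x_{k'-2}$ of a product of two derivatives, reduces to $\Prob{}{x\oplus z\in\mathcal U_{2n} \ \forall z\in Z}$ for $Z$ of size $\Theta(1)$, and Lemma~\ref{lem:slightly_notrivial_bound} gives $n^{-|Z|/2}(\log n)^{O_k(1)}$ which matches the $1/n$ we extracted from the orbit bound times the number of $x_i$-choices normalization.

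More carefully, the reduction goes: after pulling out the orbit-size gain, it is enough to estimate $\Expect{x_1,\ldots,x_{k'-2}}{1_E\cdot\|h^{\leq 2^{100k}}\|_2^4}$. Expanding one power of $\|h^{\leq 2^{100k}}\|_2^2$ using the level-$d$ inequality on $h$ (viewed on each atom as a bounded function with small support — its support being where all the relevant shifts land in the slices, an event of probability $O(n^{-2^{k'-2}})$ on each atom block) gains a factor like $n^{-c}(\log n)^{O_k(1)}$, and the remaining power of $\|h\|_2^2$ is bounded crudely by $\Expect{x}{|h(x)|^2}$. Taking expectation over the $x_i$'s and using Fubini, this becomes a probability of the form treated in Lemma~\ref{lem:slightly_notrivial_bound} with $t = k'-2 \leq k-2$, yielding the desired $(\log n)^{O_k(1)}/n$. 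The main obstacle I anticipate is the bookkeeping of the derivative expansion: one must verify that the $2^{k'-2}$ factors in $\partial_{x_1,\ldots,x_{k'-2}}(f-g)$, after subtracting off the $g$ part, genuinely constrain $x$ to lie in $\mathcal U_{2n}$ (not just $\mathcal D_{2n}$) at enough distinct shifts $z$ that $|Z|$ is large enough for Lemma~\ref{lem:slightly_notrivial_bound} to beat the $n^{\Theta(2^k)}$ from the $\|h\|_\infty$ bound on the complementary event $\bar E$ and from the normalization; getting the interplay between the level-$d$ smallness, the orbit-size gain, and the near-independence estimate to close the induction with only an additive $O_k(1)$ loss in the exponent $C_{k',k}$ is the delicate point.
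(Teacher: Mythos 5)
Your opening reduction is the same as the paper's: since $h$ is $G$-invariant, $\widehat{h}$ is constant on orbits, orbits of $T_2$-characters stay in $T_2$, and the lower bound $\card{{\sf orb}_S}\geq n/2^{101k}$ from Claim~\ref{claim:orbit_estimates} converts the fourth-power sum into $\frac{2^{101k}}{n}$ times the square of the low-level Fourier weight (the paper does this per atom, with $W_B=\sum_{1\leq\card{S\Delta B}\leq 2^{100k}}\card{\widehat{h}(S)}^2$, which differs from your version only by an $O_k(1)$ factor). The gap is in the remaining estimate. The paper proves the bound \emph{pointwise} in $x_1,\ldots,x_{k'-2}$ on the event $E$: it shows $W_B\leq(\log n)^{O_k(1)}$ deterministically, so that the entire $1/n$ comes from the orbit gain. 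You cannot get this by applying Lemma~\ref{lem:lvl_d_inequality} ``to $h$ viewed on each atom'': $h$ is not $\{-1,0,1\}$-valued; because of the normalizations $1/\E[1_{\mathcal{U}_{2n}}]$ its values are polynomially large in $n$. The paper's fix is to open the derivative into $2^{2^{k'-2}}$ terms of the form $\mu(\mathcal{U}_{2n})^{-\card{Z}}\mu(\mathcal{D}_{2n})^{-\card{\overline{Z}}}P$, where $P(x)=\prod_{z\in Z}1_{x\oplus z\in\mathcal{U}_{2n}}\prod_{z\in\overline{Z}}1_{x\oplus z\in\mathcal{D}_{2n}}$ is genuinely $\{0,1\}$-valued, to apply the level-$d$ inequality to $P\chi_B$, and then to verify the exact cancellation
\[
\frac{\E[P]^2\log^{O_k(1)}(1/\E[P])}{\mu(\mathcal{U}_{2n})^{2\card{Z}}\mu(\mathcal{D}_{2n})^{2\card{\overline{Z}}}}\leq(\log n)^{O_k(1)},
\]
using $\E[P]\leq n^{-\card{Z}/2}(\log n)^{O_k(1)}$ from Lemma~\ref{lem:slightly_notrivial_bound} together with $\mu(\mathcal{U}_{2n})=\Theta(n^{-1/2})$ and $\mu(\mathcal{D}_{2n})=\Omega_k(1)$. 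This squaring-versus-normalization cancellation is the heart of the claim, and your write-up never carries it out.

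The alternative you sketch instead --- gain ``a factor like $n^{-c}$'' from the level-$d$ inequality on one factor, bound the other factor crudely by $\norm{h}_2^2$, then average over $x_1,\ldots,x_{k'-2}$ via Fubini and Lemma~\ref{lem:slightly_notrivial_bound} --- does not close as stated. For fixed directions satisfying $E$, $\Expect{x}{\card{h(x)}^2}$ is itself polynomially large (a single product term already contributes about $\mu(\mathcal{U}_{2n})^{-2\card{Z}}\E[P]\approx n^{\card{Z}/2}$), so an unquantified $n^{-c}$ gain against a crude second factor does not yield $(\log n)^{O_k(1)}$; moreover no averaging over the directions is needed or helpful, since Lemma~\ref{lem:slightly_notrivial_bound} holds for every fixed tuple whose atoms are large. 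Two smaller slips point the same way: the support of $h$ is the event that $x\oplus z\in\mathcal{D}_{2n}$ for all $z$ in the span, which has constant measure (the small event is all shifts landing in $\mathcal{U}_{2n}$, of measure about $n^{-2^{k'-3}}$, not $n^{-2^{k'-2}}$), and the level-$d$ inequality must be applied to the indicator products times $\chi_B$, not to $h$. So the missing piece is precisely the pointwise computation of $W_B$ by the expansion-plus-cancellation described above; once that is in place, your orbit-gain step finishes the claim exactly as in the paper.
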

\begin{proof}
Clearly we have that
\[
\Expect{x_1,\ldots,x_{k'-2}}{1_E
\cdot \sum\limits_{S\in T_2}\card{\widehat{h}(S)}^4}
= \Expect{x_1,\ldots,x_{k'-2}}{1_E
\cdot \sum\limits_{B\in \mathcal{B}}
\sum\limits_{1\leq \card{S\Delta B}\leq 2^{100\cdot k}}\card{\widehat{h}(S)}^4}.
\]
Denote $W_B = \sum\limits_{1\leq \card{S\Delta B}\leq 2^{100\cdot k}}\card{\widehat{h}(S)}^2$. As by Claim~\ref{claim:orbit_estimates} the orbit of each $S$
has size at least $n/2^{101 k}$, we get that
\begin{equation}\label{eq4}
\Expect{x_1,\ldots,x_{k'-2}}{1_E
\cdot \sum\limits_{S\in T_2}\card{\widehat{h}(S)}^4}
\leq
\frac{2^{101k}}{n}\Expect{x_1,\ldots,x_{k'-2}}{1_E
\cdot \sum\limits_{B\in \mathcal{B}}W_B^2}.
\end{equation}
Fix $x_1,\ldots,x_{k'-2}$ satisfying $E$ and $B\in\mathcal{B}$,
and inspect $W_B$.
Opening up the definition of $h$, by the triangle inequality  $\card{\widehat{h}(S)}$ is upper bounded by a sum of $2^{2^{k'-2}}$ terms of
the form $\card{\Expect{x}{F_1(x)\cdots F_{2^{k'-2}}(x)\chi_S(x)}}$
where writing ${\sf Span}(x_1,\ldots,x_{k'-2}) =
\{z_1,\ldots,z_{2^{k'-2}}\}$, each $F_i$ is either $f(x+z_i)$
or $g(x+z_i)$. Thus, by Cauchy-Schwarz
\[
W_B\leq 2^{2^{k'-2}}\sum\limits_{F_1,\ldots,F_{2^{k'-2}}}
W_{\leq 2^{100\cdot k}}[F_1\cdots F_{2^{k'-2}}\chi_B].
\]
Fix a choice of $F_1,\ldots,F_{2^{k-2}}$. Namely, fix a subset
$Z\subseteq {\sf Span}(x_1,\ldots,x_{k-2})$, and say that
\[
F_1\cdots F_{2^{k-2}}(x) = \prod\limits_{z\in Z}f(x\oplus z)\prod\limits_{z'\in\overline{Z}} g(x\oplus z).
\]
Write $P(x) = \prod\limits_{z\in Z}{1_{x\oplus z\in \mathcal{U}_{2n}}}\prod\limits_{z\in \overline{Z}}{1_{x\oplus z\in \mathcal{D}_{2n}}}$. Then
\begin{align*}
W_{\leq 2^{100\cdot k}}[F_1\cdots F_{2^{k'-2}}\chi_B]
&=\frac{1}{\mu(\mathcal{U}_{2n})^{2\card{Z}}
\mu(\mathcal{D}_{2n})^{2\card{\overline{Z}}}}
W_{\leq 2^{100\cdot k}}[P\chi_B]\\
&\leq
\frac{1}{\mu(\mathcal{U}_{2n})^{2\card{Z}}
\mu(\mathcal{D}_{2n})^{2\card{\overline{Z}}}}
\E[P]^2 \log^{O_{k}(1)}(\E[P]).
\end{align*}
where the last transition is by Lemma~\ref{lem:lvl_d_inequality}.
By Lemma~\ref{lem:slightly_notrivial_bound} we have
$\E[P]\leq n^{-\card{Z}/2} (\log n)^{O_k(1)}$, and
by inspection $\mu(\mathcal{D}_{2n})\geq \Omega_k(1)$,
$\mu(\mathcal{U}_{2n})\geq \Omega(n^{-1/2})$. Thus,
using the monotonicity of the function $z\rightarrow z^2\log^{O_{k}(1)}(1/z)$ in the interval $[0,c]$ where
$c = c(k)>0$ is an absolute constant, we get that
\[
W_{\leq 2^{100\cdot k}}[F_1\cdots F_{2^{k-2}}\chi_B]
\leq (\log n)^{O_k(1)}.
\]
It follows that
\begin{equation}\label{eq6}
    W_B\leq (\log n)^{O_k(1)},
\end{equation}
and plugging~\eqref{eq6} into~\eqref{eq4} gives that
\[
\Expect{x_1,\ldots,x_{k'-2}}{1_E
\cdot \sum\limits_{S\in T_2}\card{\widehat{h}(S)}^4}
\leq \frac{(\log n)^{O_k(1)}}{n}.
\]
as required.
\end{proof}

Next, we bound the contribution from $T_3$.
\begin{claim}\label{claim:T_3}
   $\Expect{x_1,\ldots,x_{k'-2}}{1_E
\cdot \sum\limits_{S\in T_3}\card{\widehat{h}(S)}^4} \leq O_k\left(\frac{1}{n}\right)$.
\end{claim}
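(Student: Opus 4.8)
The plan is to handle the $T_3$ contribution by using the large orbit size from Claim~\ref{claim:orbit_estimates}(3) together with Parseval. For any fixed choice of $x_1,\dots,x_{k'-2}$ satisfying $E$, every character $S\in T_3$ has orbit size $\Omega_k(n^{100\cdot 2^k})$ under $G$, and all characters in a common orbit share the same Fourier coefficient of $h$. Hence
\[
\sum_{S\in T_3}\card{\widehat h(S)}^4
\leq \left(\max_{S\in T_3}\card{\widehat h(S)}^2\right)\sum_{S\in T_3}\card{\widehat h(S)}^2
\leq \left(\max_{S\in T_3}\card{\widehat h(S)}^2\right)\norm{h}_2^2.
\]
So it suffices to bound $\max_{S}\card{\widehat h(S)}^2$ by something like $n^{-2}\cdot(\text{small})$ and $\norm{h}_2^2$ by a polynomial in $n$, or more efficiently to exploit the orbit structure directly: since each orbit contributes its value $n_{\text{orb}}\geq \Omega_k(n^{100\cdot 2^k})$ times, the number of distinct orbits meeting $T_3$ with $\card{\widehat h(S)}\neq 0$ is at most $\norm{h}_2^2 / (\min\card{\widehat h(S)}^2 \cdot n_{\text{orb}})$, which is too crude. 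The clean route is the first displayed inequality: bound $\norm{h}_2^2$ crudely and bound $\max_S \card{\widehat h(S)}$ sharply.

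For the $\ell_2$ bound, note $h = \partial_{x_1,\dots,x_{k'-2}}(f-g)$ is a product of $2^{k'-2}$ shifted copies of $f-g$ (up to the derivative sign structure), and $\norm{f-g}_\infty\leq n$, so $\norm{h}_\infty\leq (2n)^{2^{k'-2}}$, giving $\norm{h}_2^2\leq n^{O_k(1)}$; in fact a better bound comes from $\norm{h}_2^2 = \sum_S\card{\widehat h(S)}^2 \leq \E[|h|^2]$, and since $h$ is supported (after expansion) on events like $P$ above, one gets $\E[|h|^2]\leq n^{O_k(1)}$ easily. For the $\ell_\infty$ bound on Fourier coefficients, I would expand $\widehat h(S)$ as a sum of $2^{2^{k'-2}}$ terms $\Expect{x}{F_1(x)\cdots F_{2^{k'-2}}(x)\chi_S(x)}$ with each $F_i\in\{f(x+z_i),g(x+z_i)\}$, exactly as in the proof of Claim~\ref{claim:T2}. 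Each such term is bounded in absolute value by $\frac{1}{\mu(\mathcal{U}_{2n})^{2\card Z}\mu(\mathcal{D}_{2n})^{2\card{\overline Z}}}\cdot\big|\Expect{x}{P(x)\chi_{S\Delta B}(x)}\big|$ for the appropriate $B\in\mathcal{B}$; but then $\big|\widehat{P}(S\Delta B)\big|^2\leq W_{\leq \card{S\Delta B}}[P]$... which is useless for large $d=\card{S\Delta B}$. Instead I would use $\card{\widehat P(S\Delta B)}\leq \E[P]\leq n^{-\card Z/2}(\log n)^{O_k(1)}$ by Lemma~\ref{lem:slightly_notrivial_bound} when $\card Z\geq 1$, and when $Z=\emptyset$ (all factors are $g$) we have $P = \prod_z 1_{x\oplus z\in\mathcal{D}_{2n}}$, so $\E[P]=\Omega_k(1)$ and $\card{\widehat P(S\Delta B)}$ need not be small — this is the subtle point.

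The resolution of the $Z=\emptyset$ case is that the relevant term is then $\Expect{x}{\prod_z g(x\oplus z)\chi_S(x)}$, which is (a sign times) a Fourier-type coefficient of a derivative of $g$ alone, and in fact $\sum_{S\in T_3}$ of the fourth power of just the pure-$g$ contribution is controlled because $g$ is a bounded function on a \emph{constant}-measure set $\mathcal{D}_{2n}$, so one should separate $\widehat h(S) = \widehat{(\partial g\text{ part})}(S) + (\text{terms with at least one }f)$. For the terms with at least one $f$ factor, $\card{\widehat h(S)}\leq n^{1/2\cdot 2\card Z}\cdot n^{-\card Z/2}(\log n)^{O_k(1)}\cdot(\log n)^{O_k(1)} = n^{\card Z/2}(\log n)^{O_k(1)}$ — wait, the prefactor $\mu(\mathcal U_{2n})^{-2\card Z}$ is $n^{\card Z}$, which grows, so $\card{\widehat h(S)}$ for these terms is at most $n^{\card Z}\cdot n^{-\card Z/2}(\log n)^{O_k(1)} = n^{\card Z/2}(\log n)^{O_k(1)}\leq n^{2^{k'-2}/2}(\log n)^{O_k(1)}$, and combined with $\norm h_2^2\leq n^{O_k(1)}$ this does not obviously beat $1/n$; so the argument must instead bound $\sum_{S\in T_3}\card{\widehat h(S)}^4$ by noting each nonzero orbit is counted $\Omega_k(n^{100\cdot 2^k})$ times in $\norm h_2^2\leq n^{O_k(1)}$, hence at most $n^{O_k(1)}/n^{100\cdot 2^k}$ orbits appear, each contributing at most $(\text{orbit size})\cdot\card{\widehat h(S)}^4\leq n^{O_k(1)}\cdot n^{O_k(2^{k'})}$, which is still fine provided $100\cdot 2^k$ dominates all the $O_k$ exponents — and indeed the constant $100$ in Claim~\ref{claim:orbit_estimates} was chosen precisely so that $100\cdot 2^k$ beats every competing polynomial power. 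I expect the main obstacle to be exactly this bookkeeping: carefully choosing which quantity to bound by orbit-size-counting versus by $\ell_\infty$, and verifying that the generous constant $100$ in the orbit lower bound absorbs the (large but bounded-in-$k$) powers of $n$ coming from the crude $\ell_\infty$ estimates on $f$, $g$, and their products; the pure-$g$ term, having no small $\E[P]$ savings, is the one that forces reliance on the orbit-counting rather than on smallness of Fourier coefficients.
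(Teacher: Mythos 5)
You have all the needed ingredients on the table --- the reduction $\sum_{S\in T_3}\card{\widehat h(S)}^4\leq\big(\max_{S\in T_3}\card{\widehat h(S)}^2\big)\norm h_2^2$, the fact that $\widehat h$ is constant on $G$-orbits, and the orbit lower bound $\Omega_k(n^{100\cdot 2^k})$ from Claim~\ref{claim:orbit_estimates} --- but you never combine them into the one step that closes the argument, and the step you write in its place is invalid. The paper's proof is exactly: for $S'\in T_3$, Parseval restricted to the single orbit of $S'$ gives $\card{{\sf orb}_{S'}}\cdot\card{\widehat h(S')}^2=\sum_{S\in{\sf orb}_{S'}}\card{\widehat h(S)}^2\leq\norm h_2^2\leq\norm{f-g}_\infty^{2^{k'-1}}\leq n^{2^{k-1}}$, hence $\max_{S\in T_3}\card{\widehat h(S)}^2\leq O_k(n^{-99\cdot 2^k})$; plugging this into your first display and bounding $\sum_S\card{\widehat h(S)}^2\leq n^{2^{k-1}}$ once more gives $O_k(n^{-98\cdot 2^k})\leq O_k(1/n)$, uniformly over $x_1,\ldots,x_{k'-2}$ satisfying $E$. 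Your closing argument instead asserts that since each nonzero orbit is ``counted $\Omega_k(n^{100\cdot 2^k})$ times in $\norm h_2^2\leq n^{O_k(1)}$,'' at most $n^{O_k(1)}/n^{100\cdot 2^k}$ orbits appear. That deduction is false: orbits enter $\norm h_2^2$ with weight $\card{\widehat h(S)}^2$, for which there is no lower bound, so you cannot bound the number of orbits carrying a nonzero coefficient (taken literally your bound is $<1$, i.e.\ no orbits at all). What the weighted count does give is precisely the pointwise bound $\card{\widehat h(S)}^2\leq\norm h_2^2/\card{{\sf orb}_S}$ --- the statement you needed but never wrote down.

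The long detour through the Claim~\ref{claim:T2} machinery (expanding $\widehat h(S)$ into the terms $F_1\cdots F_{2^{k'-2}}$, invoking Lemma~\ref{lem:slightly_notrivial_bound} and the level-$d$ inequality) is unnecessary for $T_3$ and, as you yourself observe, does not work: the $\mu(\mathcal U_{2n})^{-2\card Z}$ prefactors eat the $n^{-\card Z/2}$ savings, and the pure-$g$ term has no smallness at all. For $T_3$ the only mechanism is the huge orbit size played against a crude $\ell_2$ (equivalently $\ell_\infty$) bound on $h$; the $\E[P]$/level-$d$ estimates are what handle $T_1$ and $T_2$, where orbits can be small. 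So your diagnosis of where the difficulty lies is right, but the proof as written has a genuine gap at the final counting step, and the fix is the one-line orbit-Parseval bound above.
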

\begin{proof}
Fix $x_1,\ldots,x_{k'-2}$ satisfying $E$ and fix $S'\in T_3$. Note that
\[
\card{{\sf orb}_{S'}}\card{\widehat{h}(S')}^2
=
\sum\limits_{S\in {\sf orb}_{S'}}\card{\widehat{h}(S)}^2
\leq \norm{h}_2^2,
\]
where we used Parseval's equality. Note that $\norm{f-g}_{\infty}\leq n$, and so
$\norm{h}_2^2\leq \norm{f-g}_{\infty}^{2^{k-1}}
\leq n^{2^{k-1}}$. Concluding, we get via Claim~\ref{claim:orbit_estimates} that
\[
\card{\widehat{h}(S')}^2
\leq \frac{n^{2^{k-1}}}{\card{{\sf orb}_{S'}}}
\leq O_{k}(n^{-99\cdot 2^k}).
\]
It follows that
\[
\Expect{x_1,\ldots,x_{k'-2}}{1_E
\cdot \sum\limits_{S\in T_3}\card{\widehat{h}(S)}^4}
\leq
O_{k}(n^{-99\cdot 2^k})
\Expect{x_1,\ldots,x_{k'-2}}{\sum\limits_{S\in T_3}\card{\widehat{h}(S)}^2}
\leq
O_{k}(n^{-99\cdot 2^k})
\Expect{x_1,\ldots,x_{k'-2}}{\norm{h}_2^2},
\]
which is at most
$O_{k}(n^{-98\cdot 2^k})\leq O_k\left(\frac{1}{n}\right)$ using the upper bound
$\norm{h}_2^2\leq n^{2^{k-1}}$.
\end{proof}

We end by upper bounding the contribution from $T_1$:
\begin{claim}\label{claim:T_1}
   $\Expect{x_1,\ldots,x_{k'-2}}{1_E
\cdot \sum\limits_{S\in T_1}\card{\widehat{h}(S)}^4}
\leq \frac{(\log n)^{C_{k'-1,k} + O_k(1)}}{n}.$
\end{claim}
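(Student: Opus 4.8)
The plan is to exploit the fact that, wherever $h=\partial_{x_1,\ldots,x_{k'-2}}(f-g)$ is nonzero, every character $\chi_S$ with $S\in T_1=\mathcal B$ takes a fixed value. This collapses the entire $T_1$–sum to the single term $\card{\widehat h(\emptyset)}^4$, which is then controlled by the inductive hypothesis at level $k'-1$.

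\smallskip
\noindent\emph{Freezing the atom parities.} First I would prove that there are signs $(\epsilon_B)_{B\in\mathcal B}$, depending only on $x_1,\ldots,x_{k'-2}$, with $h(x)\chi_B(x)=\epsilon_B\,h(x)$ for every $x$ and every $B\in\mathcal B$. Since $h$ is real-valued and $h(x)=\prod_{T\subseteq[k'-2]}(f-g)(x\oplus x_T)$ with $x_T=\bigoplus_{i\in T}x_i$, the condition $h(x)\neq 0$ forces $x\oplus x_T\in\mathcal D_{2n}$, i.e. $\card{x\oplus x_T}\equiv a\pmod{2^{k-1}}$, for every $T$. Because each $x_T$ is a union of atoms of $\mathcal B$, one has $\card{x\oplus x_T}=\card{x_T}+\sum_{B}\big(1-2\cdot 1[B\subseteq {\sf supp}(x_T)]\big)\,a_B$, where $a_B$ denotes the number of $1$–coordinates of $x$ lying in the atom $B$; so these are linear congruences in $(a_B)_B$ modulo $2^{k-1}$. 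Peeling off the generators $x_1,\ldots,x_{k'-2}$ one at a time — each step an instance of Fact~\ref{fact:trivial_inter}, halving the modulus, exactly as in the proof of Lemma~\ref{lem:all_in_slice} — shows that $\card{x\wedge\bigwedge_{i\in U}x_i}$ is determined modulo $2^{k-1-|U|}$ for every $U\subseteq[k'-2]$; since $k'\le k$, taking $|U|=k'-2$ still leaves a modulus of $2^{k-k'+1}\ge 2$. Hence every $\card{x\wedge\bigwedge_{i\in U}x_i}$, and therefore by inclusion–exclusion every $a_B$, is determined modulo $2$ whenever $h(x)\neq 0$; consequently $\chi_B(x)=\prod_{B'}(-1)^{a_{B'}}$, the product over the atoms $B'$ whose union is $B$, equals a fixed sign $\epsilon_B$ there. (If the congruences are inconsistent then $h\equiv 0$ and the statement is vacuous.)

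\smallskip
\noindent\emph{Collapse and conclusion.} From the above, $\widehat h(B)=\epsilon_B\widehat h(\emptyset)$ for all $B\in\mathcal B$, so $\sum_{S\in T_1}\card{\widehat h(S)}^4=|\mathcal B|\cdot\card{\widehat h(\emptyset)}^4\le 2^{2^{k'-2}}\card{\widehat h(\emptyset)}^4$, and it remains to bound $\Expect{x_1,\ldots,x_{k'-2}}{1_E\,\card{\widehat h(\emptyset)}^4}$ by $\norm{1_{E'}\card{\widehat h(\emptyset)}^2}_\infty\cdot\Expect{x_1,\ldots,x_{k'-2}}{\card{\widehat h(\emptyset)}^2}$ plus a negligible term, where $E'\subseteq E$ is the sub-event on which $x_1,\ldots,x_{k'-2}$ are additionally linearly independent (the rest of $E$ has probability $2^{-\Omega(n)}$ and contributes $O_k(1/n)$ via the crude bound $\card{\widehat h(\emptyset)}^4\le\norm{h}_\infty^4\le n^{O_k(2^k)}$, just as the event $\bar E$ is handled in~\eqref{eq1}). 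On $E'$ the $x_T$ are distinct, and expanding $|f-g|\le\mu(\mathcal{U}_{2n})^{-1}1_{\mathcal{U}_{2n}}+\mu(\mathcal{D}_{2n})^{-1}1_{\mathcal{D}_{2n}}$ and applying Lemma~\ref{lem:slightly_notrivial_bound} to each of the $2^{2^{k'-2}}$ resulting terms — together with $\mu(\mathcal{U}_{2n})^{-1}=\Theta(\sqrt n)$ and $\mu(\mathcal{D}_{2n})^{-1}=\Theta_k(1)$ — yields $\card{\widehat h(\emptyset)}\le\Expect{x}{|h(x)|}\le(\log n)^{O_k(1)}$, by the same mechanism as in Claim~\ref{claim:T2}. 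For the averaged factor, since $h$ is real one has $\card{\widehat h(\emptyset)}^2=\Expect{x,v}{\partial_{v,x_1,\ldots,x_{k'-2}}(f-g)(x)}$, and averaging over $x_1,\ldots,x_{k'-2}$ gives, by definition of the Gowers norm,
\[
\Expect{x_1,\ldots,x_{k'-2}}{\card{\widehat h(\emptyset)}^2}=\norm{f-g}_{U_{k'-1}}^{2^{k'-1}}\le\frac{(\log n)^{C_{k'-1,k}}}{n}
\]
by the inductive hypothesis (Theorem~\ref{thm:main'} for $k'-1$). Multiplying the two bounds and adding the $O_k(1/n)$ error gives $\Expect{x_1,\ldots,x_{k'-2}}{1_E\sum_{S\in T_1}\card{\widehat h(S)}^4}\le\frac{(\log n)^{C_{k'-1,k}+O_k(1)}}{n}$, which is the claim (and furnishes $C_{k',k}\le C_{k'-1,k}+O_k(1)$).

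\smallskip
The crux is the first step: one must organise the inclusion–exclusion over $x_1,\ldots,x_{k'-2}$ carefully and verify that the hypothesis $k'\le k$ is exactly what keeps the modulus even all the way down to the finest atoms, so that $\chi_B$ itself — not merely $\chi_B^2$ — is constant on $\{h\neq 0\}$. Everything else reduces to the lemmas already established, apart from the routine bookkeeping of discarding linearly dependent direction tuples.
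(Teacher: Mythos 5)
Your proposal is correct and follows essentially the same route as the paper: you show $\chi_S$ is constant on the support of $h$ for every $S\in T_1$ by iterating Fact~\ref{fact:trivial_inter} (with $k'\leq k$ keeping the modulus even), collapse the $T_1$-sum to $\card{\widehat h(\emptyset)}^4$, bound one factor $\card{\widehat h(\emptyset)}^2$ pointwise by $(\log n)^{O_k(1)}$ on $E$ via Lemma~\ref{lem:slightly_notrivial_bound}, and identify $\Expect{x_1,\ldots,x_{k'-2}}{\card{\widehat h(\emptyset)}^2}=\norm{f-g}_{U_{k'-1}}^{2^{k'-1}}$ to invoke the induction hypothesis. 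The only cosmetic deviations are that you bound $\card{\widehat h(\emptyset)}$ by $\Expect{x}{|h(x)|}$ directly instead of reusing the $W_B$ bound of Claim~\ref{claim:T2}, and you add an (essentially redundant) separate treatment of linearly dependent direction tuples.
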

\begin{proof}
Fix $x_1,\ldots,x_{k'-2}$ satisfying $E$.
We observe that for all $S\in T_1$ it holds that
$\card{\widehat{h}(S)} = \card{\widehat{h}(\emptyset)}$.
To see that, it suffices to show that $\chi_S(x)$ is constant
on all $x$ such that $h(x)\neq 0$. It suffices
to prove this assertion on any atom $S$ of $\mathcal{B}$,
and for the sake of simplicity of notation we take
$S = {\sf supp}(x_1)\cap\ldots\cap {\sf supp}(x_{k'-2})$.

By definition of $h$, if $h(x)\neq 0$, then we must have
that $x\oplus z\in \mathcal{D}_{2n}$ for all $z\in {\sf Span}(x_1,\ldots,x_{k'-2})$. Thus, we get that for any such $x$
it holds that $\card{x\oplus z} = a\pmod{2^k}$, i.e. the Hamming
weight of $x\oplus z$ is constant modulo $2^k$.
Applying
Fact~\ref{fact:trivial_inter} on $x\oplus z$, $x$ and $z$,
we conclude that the Hamming weight of $x\land z$ modulo $2^{k-1}$ depends
only on $\card{z}$. In particular, for
$z\in {\sf Span}(x_2,\ldots,x_{k'-2})$
the Hamming weight of
$x\land x_1$ and $x\land (x_1\oplus z)$ is constant
modulo $2^{k-1}$, and as
\[
\card{x\land (x_1\oplus z)}
=
\card{x\land (x_1 \cup z)}
-
\card{x\land x_1\land z}
=\card{x\land z}
+\card{x\land z}
-2\card{x\land x_1\land z}
\]
we get that the Hamming weight of $\card{x\land x_1\land z}$
modulo $2^{k-2}$ depends only on $\card{z}$. Continuing in this fashion, we conclude that the Hamming weight of
$\card{x\land x_1\land\ldots\land x_{k-2}}$ modulo $2$ is
constant, and hence
$\chi_S(x)$ is constant.

Thus, we get that
\[
\Expect{x_1,\ldots,x_{k'-2}}{1_E
\cdot \sum\limits_{S\in T_1}\card{\widehat{h}(S)}^4}
\leq
2^{2^{k-2}}
\Expect{x_1,\ldots,x_{k'-2}}{\card{\widehat{h}(\emptyset)}^4}.
\]
Observe now that using the notations of Claim~\ref{claim:T2}
we have that
\[
\card{\widehat{h}(\emptyset)}^2
\leq W_{\emptyset}
\leq (\log n)^{O_k(1)}
\]
where we used~\eqref{eq6}. Thus,
\[
\Expect{x_1,\ldots,x_{k'-2}}{1_E
\cdot \sum\limits_{S\in T_1}\card{\widehat{h}(S)}^4}
\leq
(\log n)^{O_k(1)}
\Expect{x_1,\ldots,x_{k'-2}}{\card{\widehat{h}(\emptyset)}^2}
=(\log n)^{O_k(1)}\norm{f-g}_{U_{k'-1}}^{2^{k'-1}}.
\]
Upper bounding $\norm{f-g}_{U_{k'-1}}^{2^{k'-1}}$
by $\frac{(\log n)^{C_{k'-1,k}}}{n}$ via the induction hypothesis
finishes the proof.
\end{proof}
Plugging Claims~\ref{claim:T_1},~\ref{claim:T2},~\ref{claim:T_3}
into~\eqref{eq2} finishes the proof of
the inductive step, and thereby the proof of
Theorem~\ref{thm:main'}.

\section{Applications}
In this section we use Theorem~\ref{thm:main}
to prove a few applications
in property testing. Towards
this end, we use~\cite[Corollary 4.4]{dodos2022uniformity}, which
we specialize below for the
group $(\{0,1\}^{n},\oplus)$.
\begin{thm}\label{thm:dense_model_gowers}
    For all $k\in\mathbb{N}$ and
    $\eps>0$ there exists $\delta>0$
    such that the following holds.
    Suppose that $\nu\colon \{0,1\}^n\to [0,\infty)$ is a function such that
    $\norm{\nu-1}_{U_{2k}}\leq \delta$.
    Then for all $f\colon \{0,1\}^n\to\mathbb{R}$ such
    that $\card{f(x)}\leq \nu(x)$
    pointwise, there exists $\tilde{f}\colon \{0,1\}^n\to[-1,1]$
    such that
    \[
        \norm{f-\tilde{f}}_{U_k}\leq \eps.
    \]
\end{thm}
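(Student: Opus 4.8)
The statement we are asked to prove is exactly a specialization of a known dense model theorem, so the "proof" I would write has two parts: first, explaining why the cited result applies verbatim, and second — if a self-contained argument is wanted — sketching the standard Green--Tao--Reingold--Trevisan--Tulsiani--Vadhan style transference argument adapted to $U_k$ norms. Let me describe the latter, since it is the mathematically substantive route.

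The plan is to use the \emph{finitary Hahn--Banach / min-max (boosting) argument}. The key objects are the functions of "complexity bounded in terms of $U_k^{*}$", i.e. the class $\mathcal{F}$ of functions that can be written as a bounded convex combination of products $\prod_{T\subsetneq[k]} g_T(x+\sum_{i\in T}h_i)$-type dual functions (more precisely, the class whose dual norm controls $U_k$ via the Cauchy--Schwarz--Gowers inequality). First I would recall the \emph{Gowers--Cauchy--Schwarz inequality}, which gives $|\langle F, \mathcal{D}g\rangle|\le \|F\|_{U_k}\prod\|g_T\|_{\infty}$ for the dual function operator $\mathcal{D}$; this identifies $\|\cdot\|_{U_k}$ as (comparable to) the norm whose unit ball is the convex hull of these dual functions. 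Then the transference principle says: if $\nu-1$ is orthogonal to all such dual functions up to error $\delta$ (which is exactly what $\|\nu-1\|_{U_{2k}}\le\delta$ buys us, after one more Cauchy--Schwarz to absorb the extra factor of $\nu$ appearing when the dual function is built from $\nu$-bounded pieces rather than $1$-bounded pieces — this is precisely why the hypothesis is stated with $U_{2k}$ rather than $U_k$), then any $f$ with $|f|\le\nu$ can be decomposed as $f=\tilde f + (f-\tilde f)$ where $\tilde f$ is $[-1,1]$-valued and $f-\tilde f$ has small $U_k$ norm.

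The mechanism for producing $\tilde f$ is an iterative energy-increment / boosting loop: start with $\tilde f=0$; as long as $\|f-\tilde f\|_{U_k}>\eps$, by the duality above there is a dual function $\phi$ (a "distinguisher") with $\langle f-\tilde f,\phi\rangle$ large; update $\tilde f \leftarrow \Pi_{[-1,1]}(\tilde f + \eta\,\phi)$ for a small step size $\eta$; argue that the potential $\E[\Phi(\tilde f)]$ for a suitable convex $\Phi$ increases by a fixed amount each step, so the loop terminates after $O_\eps(1)$ steps; and check that throughout the loop $\langle f-\tilde f,\phi\rangle$ stays essentially as large as $\langle f,\phi\rangle-\langle\tilde f,\phi\rangle$, where the first term is controlled because $|f|\le\nu$ and $\langle \nu,\phi\rangle\approx\langle 1,\phi\rangle$ by the $\|\nu-1\|_{U_{2k}}\le\delta$ hypothesis, while the second is bounded by $\|\tilde f\|_\infty\le1$ times the "total mass" of $\phi$. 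The output $\tilde f$ is automatically $[-1,1]$-valued because of the projection $\Pi_{[-1,1]}$ at each step, and $\delta$ is chosen small enough (depending on $k,\eps$ and the number of iterations, hence ultimately only on $k,\eps$) that the accumulated error from replacing $\nu$ by $1$ over all iterations is below $\eps$.

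The main obstacle — and the step I would be most careful about — is the bookkeeping that turns $\|\nu-1\|_{U_{2k}}\le\delta$ into the statement that $\nu-1$ is nearly orthogonal to every dual function built from $\nu$-bounded components. A naive dual function of the relevant class is $\mathcal{D}(g)$ with $\|g_T\|_\infty\le 1$, against which $\langle\nu-1,\mathcal{D}g\rangle\le\|\nu-1\|_{U_k}\le\|\nu-1\|_{U_{2k}}$ directly; but in the transference loop the distinguisher $\phi$ must be allowed to have components bounded by $\nu$ (not $1$), since it arises from $f$ which is only $\nu$-bounded. Splitting $\nu = 1 + (\nu-1)$ in each of the $2^k-1$ components, expanding, and controlling each cross term via a Gowers--Cauchy--Schwarz estimate is where the factor-of-two loss in the uniformity degree is spent; I would present this as a clean lemma ("$\nu$-bounded dual functions are close to $1$-bounded dual functions in the relevant weak sense, given $\|\nu-1\|_{U_{2k}}$ small") and then feed it into the boosting argument. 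Alternatively, if brevity is preferred, the entire proof collapses to: "this is \cite[Corollary 4.4]{dodos2022uniformity} applied to the group $(\{0,1\}^n,\oplus)$, noting that the linear-forms/uniformity hypothesis there is implied by $\|\nu-1\|_{U_{2k}}\le\delta$," and I would state it that way with a one-line pointer to the above as the conceptual reason.
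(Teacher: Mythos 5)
The paper does not actually prove this statement: it is imported verbatim as a specialization of \cite[Corollary 4.4]{dodos2022uniformity} to the group $(\{0,1\}^n,\oplus)$, which is exactly the short route you offer in your last sentence, so your proposal matches the paper's treatment. Your transference sketch (Gowers--Cauchy--Schwarz duality plus the boosting/energy-increment loop, with the $U_{2k}$ hypothesis spent on controlling dual functions with $\nu$-bounded rather than $1$-bounded components) is the standard argument behind that citation and correctly flags the one delicate step, though as written it is an outline rather than a worked-out proof.
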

\subsection{Proof of Theorem~\ref{thm:linearity}}
In this section we prove Theorem~\ref{thm:linearity},
and for that we first need the
following claim which is a version
of that theorem for bounded functions
on $\{0,1\}^n$.
\begin{claim}\label{claim:trivial_blr}
    Suppose that a function
    $f\colon \{0,1\}^n\to[-1,1]$
    satisfies that
    \[
        \Expect{x,y,z\in\{0,1\}^n}{f(x)f(y)f(z)f(x\oplus y\oplus z)}\geq \eps.
    \]
    Then there exists $S\subseteq [n]$
    such that $\card{\widehat{f}(S)}\geq \sqrt{\eps}$.
\end{claim}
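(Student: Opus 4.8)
The plan is to expand the fourfold expectation in the Fourier basis, recognize that it collapses to $\sum_{S\subseteq[n]}\widehat{f}(S)^4$, and then finish with a one-line H\"older-type estimate against Parseval.

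First I would write $f=\sum_{S}\widehat{f}(S)\chi_S$ and substitute into $\Expect{x,y,z\in\{0,1\}^n}{f(x)f(y)f(z)f(x\oplus y\oplus z)}$. Since $\chi_V(x\oplus y\oplus z)=\chi_V(x)\chi_V(y)\chi_V(z)$ and $x,y,z$ are independent and uniform, each term factors as $\Expect{x}{\chi_S(x)\chi_V(x)}\Expect{y}{\chi_T(y)\chi_V(y)}\Expect{z}{\chi_U(z)\chi_V(z)}$, which by orthonormality of the characters vanishes unless $S=T=U=V$, in which case it equals $1$. Hence
\[
\Expect{x,y,z\in\{0,1\}^n}{f(x)f(y)f(z)f(x\oplus y\oplus z)}=\sum_{S\subseteq[n]}\widehat{f}(S)^4,
\]
the standard identity underlying the BLR analysis. (All Fourier coefficients are real because $f$ is real-valued, so $\widehat f(S)^4=\card{\widehat f(S)}^4$.)

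Next I would bound $\sum_{S}\widehat{f}(S)^4\leq\big(\max_{S}\widehat{f}(S)^2\big)\sum_{S}\widehat{f}(S)^2=\big(\max_{S}\widehat{f}(S)^2\big)\,\norm{f}_2^2$ by Parseval, and note $\norm{f}_2^2=\Expect{x}{f(x)^2}\leq 1$ because $\card{f}\leq 1$ pointwise. Combining this with the hypothesis gives $\max_{S}\widehat{f}(S)^2\geq\eps$, so some $S\subseteq[n]$ satisfies $\card{\widehat{f}(S)}\geq\sqrt{\eps}$, as claimed.

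There is no genuine obstacle here: the only points requiring a moment of care are that $f$ is real-valued (so the Fourier coefficients are real and the left-hand side is exactly $\sum_S\widehat f(S)^4$, not merely bounded by it) and that the pointwise bound $\card{f}\leq 1$ is precisely what forces $\norm{f}_2^2\leq 1$; everything else is the routine Fourier expansion.
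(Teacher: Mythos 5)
Your proof is correct and follows the same route as the paper: expand in the Fourier basis to obtain $\sum_{S}\widehat{f}(S)^4$, then bound this by $\max_S \widehat{f}(S)^2 \cdot \norm{f}_2^2 \leq \max_S \widehat{f}(S)^2$ using Parseval and the pointwise bound $\card{f}\leq 1$. The only difference is that you spell out the orthogonality computation behind the identity, which the paper takes as standard.
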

\begin{proof}
Plugging in the Fourier expansion of $f$,
we get that
\begin{align*}
\eps
\leq
\Expect{x,y,z\in\{0,1\}^n}{f(x)f(y)f(z)f(x\oplus y\oplus z)}
=\sum\limits_{S\subseteq [n]}\widehat{f}(S)^4
&\leq \max_{S}\widehat{f}(S)^2
\sum\limits_{S\subseteq [n]}\widehat{f}(S)^2\\
&=\max_{S}\widehat{f}(S)^2\norm{f}_2^2,
\end{align*}
which is at most $\max_{S}\widehat{f}(S)^2$
as $\norm{f}_2\leq 1$.
\end{proof}

We also need the Cauchy-Schwarz-Gowers inequality (see for example~\cite[Lemma 4.2]{hatami2011higher})
\begin{lemma}\label{lem:CSG}
  Suppose that $\{f_S\}_{S\subseteq [r]}$
  are complex valued functions. Then
  \[
  \card{\Expect{x, y_1,\ldots,y_r}{\prod\limits_{S\subseteq [r]}C^{k-\card{S}}f_S\left(x+\sum\limits_{i\in S}y_i\right)}}
  \leq \prod\limits_{S\subseteq [r]}\norm{f_S}_{U^r}.
  \]
\end{lemma}

We are now ready to prove Theorem~\ref{thm:linearity}.
\begin{proof}[Proof of Theorem~\ref{thm:linearity}]
    Fix $k=2$, $\eps>0$ and
    take $\delta$ from Theorem~\ref{thm:dense_model_gowers}
    for $\eps/10^6$. Take a function
    $f\colon \mathcal{U}_{2n}\to\{0,1\}$
    and define $f'\colon \{0,1\}^{2n}\to\mathbb{R}$
    by $f'(x) = (-1)^{f(x)}
    1_{x\in\mathcal{U}_{2n}}
    \frac{\E[1_{\mathcal{D}_{2n,2k}}]}{\E[1_{\mathcal{U}_{2n}}]}$.
    Note that
    \begin{align*}
    &\Expect{x,y,z}{f'(x)f'(y)f'(z)f'(x\oplus y\oplus z)}
    =
    \E[1_{\mathcal{D}_{2n,2k}}]^4
    \frac{\Prob{x,y,z\in \{0,1\}^{2n}}{x,y,z,x\oplus y\oplus z\in\mathcal{U}_{2n}}}{\E[1_{U_{2n}}]^4}\\
    &\qquad\qquad\qquad\qquad\cdot \left(2\cProb{x,y,z}{x,y,z,x\oplus y\oplus z\in \mathcal{U}_{2n}}{f(x)+f(y)+f(z) = f(x\oplus y\oplus z)}-1\right).
    \end{align*}
    By the premise the second term is
    at least $2\eps$, and a
    direct calculation shows that
    \[
    \Prob{x,y\in \{0,1\}^{2n}}{x,y,z,x\oplus y\oplus z\in\mathcal{U}_{2n}}\geq \E[1_{U_{2n}}]^4,
    \]
    and hence $\Expect{x,y,z}{f'(x)f'(y)f'(z)f'(x\oplus y\oplus z)}\geq 2\E[1_{\mathcal{D}_{2n,2k}}]\eps$.
    As $\E[1_{\mathcal{D}_{2n,2k}}]\geq \frac{1}{10}$ we get that

   \begin{equation}\label{eq7}
    \Expect{x,y,z}{f'(x)f'(y)f'(z)f'(x\oplus y\oplus z)}\geq \frac{2\eps}{10^4}.
    \end{equation}
    Take
    \[
    \nu = \frac{\E[1_{\mathcal{D}_{2n,2k}}]}{\E[1_{\mathcal{U}_{2n}}]}1_{\mathcal{U}_{2n}} + 1 - 1_{\mathcal{D}_{2n,2k}},
    \]
    and note that $\nu$ is non-negative and that $\card{f'(x)}\leq \nu(x)$
    pointwise. By Theorem~\ref{thm:main}
    we have
    \[
    \norm{\nu - 1}_{U_{2k}}
    =
    \left\|
    \frac{\E[1_{\mathcal{D}_{2n,2k}}]}{\E[1_{\mathcal{U}_{2n}}]}
    1_{\mathcal{U}_{2n}} - 1_{\mathcal{D}_{2n,2k}}
    \right\|_{U_{2k}}
    \leq
        \left\|
    \frac{1}{\E[1_{\mathcal{U}_{2n}}]}
    1_{\mathcal{U}_{2n}} - \frac{1_{\mathcal{D}_{2n,2k}}}{\E[1_{\mathcal{D}_{2n,2k}}]}
    \right\|_{U_{2k}}
    =o(1),
    \]
    and hence for sufficiently large
    $n_0$, if $n\geq n_0$ we have that
    $\norm{\nu - 1}_{U_{2k}}\leq \delta$.
    Applying Theorem~\ref{thm:dense_model_gowers}
    we get that there is a function
    $\tilde{f}\colon \{0,1\}^n\to[-1,1]$
    with $\norm{f'-\tilde{f}}_{U_k}\leq \frac{\eps}{10^6}$. Define the
    function $\Delta(x) = f'(x) - \tilde{f}(x)$; by the triangle inequality
    we may bound
    \begin{align}\label{eq8}
    &\card{\Expect{x,y,z}{f'(x)f'(y)f'(z)f'(x\oplus y\oplus z)}
    -\Expect{x,y,z}{
    \tilde{f}(x)\tilde{f}(y)
    \tilde{f}(z)\tilde{f}(x\oplus y\oplus z)}}\notag\\
    &=\card{\Expect{x,y,z}{(\tilde{f}+\Delta)(x)(\tilde{f}+\Delta)(y)(\tilde{f}+\Delta)(z)(\tilde{f}+\Delta)(x\oplus y\oplus z)}
    -\Expect{x,y,z}{
    \tilde{f}(x)\tilde{f}(y)
    \tilde{f}(z)\tilde{f}(x\oplus y\oplus z)}}\notag\\
    &\leq
    \Expect{x,y,z}{
    4\Delta(x)\tilde{f}(y)\tilde{f}(z)\tilde{f}(x\oplus y\oplus z)
    +6\Delta(x)\Delta(y) \tilde{f}(z)\tilde{f}(x\oplus y\oplus z)
    +4\Delta(x)\Delta(y)\Delta(z)\tilde{f}(x\oplus y\oplus z)
    }\notag\\
    &+\Expect{x,y,z}{\Delta(x)\Delta(y)\Delta(z)\Delta(x\oplus y\oplus z)
    }.
    \end{align}
    We use Lemma~\ref{lem:CSG} to bound each expectation
    by $\norm{\Delta}_{U_2}$,
    and we
    take the first expression for example. Note that
    the distributions of $(x,y,z,x\oplus y\oplus z)$
    is the same as of $(x,x\oplus y, x\oplus z, x\oplus y\oplus z)$ and that $\Delta,\tilde{f}$ are real valued, so by Lemma~\ref{lem:CSG} the expectation of the first term is at most
    \[
    \norm{\Delta}_{U_2}
    \norm{\tilde{f}}_{U_2}^{3}
    \leq
    \norm{\Delta}_{U_2},
    \]
    where we used the fact that
    $\tilde{f}$ is $1$-bounded.
    Therefore, combining~\eqref{eq7}~\eqref{eq8} we get that
    \[
        \Expect{x,y,z}
        {
        \tilde{f}(x)
        \tilde{f}(y)
        \tilde{f}(z)
        \tilde{f}(x\oplus y\oplus z)}\geq
        \frac{2\eps}{10^4}
        -15\norm{\Delta}_{U_2}
        \geq \frac{\eps}{10^4}.
    \]
    By Claim~\ref{claim:trivial_blr}
    we conclude that there exists
    $S\subseteq [2n]$ such that
    $\card{\widehat{\tilde{f}}(S)}\geq \frac{\sqrt{\eps}}{100}$, and so
    \[
    \card{\widehat{f'}(S)}
    \geq
    \card{\widehat{\tilde{f}}(S)}
    -\norm{f'-\tilde{f}}_{1}
    \geq
    \card{\widehat{\tilde{f}}(S)}
    -\norm{\Delta}_{U_2}
    \geq \frac{\sqrt{\eps}}{200}.
    \]
    Define $L_S = \bigoplus_{i\in S} x_i$,
    and suppose without loss of generality
    that $\widehat{f'}(S)$ is
    positive (otherwise we multiply $\tilde{f}$ by $-1$). Note that
    \[
        \frac{\sqrt{\eps}}{200}
        \leq\widehat{f'}(S)
        =\E[1_{\mathcal{D}_{2n, 2k}}]
        \Expect{x\in\mathcal{U}_{2n}}
        {(-1)^{L_S(x) + f(x)}}
        =
        \E[1_{\mathcal{D}_{2n, 2k}}]
        \left(2\Prob{x\in\mathcal{U}_{2n}}{f(x) = L_S(x)} - 1\right).
    \]
    Rearranging and using
    $\E[1_{\mathcal{D}_{2n, 2k}}]\leq 1$
    finishes the proof.
\end{proof}

\subsection{Proof of Theorem~\ref{thm:gowers_low_soundness}}
We first need the following elementary
conclusion of Theorem~\ref{thm:main}.
\begin{fact}\label{fact:use_gowers_CS}
    \[
    \Prob{x,h_1,\ldots,h_{d}\in\{0,1\}^{2n}}
    {x\oplus \bigoplus_{i\in T}h_i\in\mathcal{U}_{2n}~\forall T\subseteq[d]}
    \geq
    \E[1_{\mathcal{U}_{2n}}]^{2^{d}}.
    \]
\end{fact}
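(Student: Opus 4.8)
The plan is to recognize the left-hand side as a Gowers inner product of the indicator $1_{\mathcal{U}_{2n}}$ with itself over the \emph{ambient} cube $\{0,1\}^{2n}$, and then invoke the monotonicity of the Gowers uniformity norms. Writing $g = 1_{\mathcal{U}_{2n}}$ and $\mu = \E[g] = \E[1_{\mathcal{U}_{2n}}]$, and using that $g$ is real-valued (so complex conjugation plays no role), we have exactly
\[
\Prob{x,h_1,\ldots,h_{d}\in\{0,1\}^{2n}}
{x\oplus \bigoplus_{i\in T}h_i\in\mathcal{U}_{2n}~\forall T\subseteq[d]}
=
\Expect{x,h_1,\ldots,h_d\in\{0,1\}^{2n}}{\prod_{T\subseteq[d]}g\Big(x\oplus\bigoplus_{i\in T}h_i\Big)}
=
\norm{g}_{U_d}^{2^d}.
\]
Since $g\geq 0$ pointwise this quantity is non-negative, and the claim is precisely that it is at least $\mu^{2^d}$, which is the standard chain $\norm{g}_{U_d}\geq\norm{g}_{U_{d-1}}\geq\cdots\geq\norm{g}_{U_1}=\card{\E g}=\mu$.

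I would spell this out by induction on $d$, proving $\E_{x,h_1,\dots,h_d}\big[\prod_{T\subseteq[d]}g(x\oplus\bigoplus_{i\in T}h_i)\big]\geq \mu^{2^d}$. The base case $d=0$ is the identity $\E_x[g(x)]=\mu$. For the inductive step, fix $h_1,\dots,h_{d-1}$ and set $u(x)=\prod_{T\subseteq[d-1]}g(x\oplus\bigoplus_{i\in T}h_i)$; splitting the product over $T\subseteq[d]$ according to whether $d\in T$ gives $\prod_{T\subseteq[d]}g(x\oplus\bigoplus_{i\in T}h_i) = u(x)\,u(x\oplus h_d)$. Because $h_d$ is uniform and independent of $x$, the pair $(x,x\oplus h_d)$ is uniform on $\{0,1\}^{2n}\times\{0,1\}^{2n}$, so $\E_{x,h_d}[u(x)u(x\oplus h_d)] = (\E_x u(x))^2$. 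Averaging over $h_1,\dots,h_{d-1}$ and applying Jensen's inequality (equivalently Cauchy--Schwarz) yields
\[
\Expect{x,h_1,\dots,h_d}{\prod_{T\subseteq[d]}g(\cdots)}
= \Expect{h_1,\dots,h_{d-1}}{\big(\E_x u(x)\big)^2}
\geq \big(\Expect{x,h_1,\dots,h_{d-1}}{u(x)}\big)^2
= \Big(\Expect{x,h_1,\dots,h_{d-1}}{\prod_{T\subseteq[d-1]}g(\cdots)}\Big)^2
\geq \mu^{2^d},
\]
where the last step is the induction hypothesis applied to $d-1$.

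The main obstacle: honestly there is none of substance; the statement is elementary once one observes that the left-hand side is literally $\norm{1_{\mathcal{U}_{2n}}}_{U_d}^{2^d}$. The only points worth a word of care are that the sampling is over the full cube $\{0,1\}^{2n}$ rather than the slice, which is exactly what makes $x$ and each $x\oplus\bigoplus_{i\in T}h_i$ individually uniform and hence matches the definition of the Gowers norm; and that $1_{\mathcal{U}_{2n}}$ is real and non-negative, so the product carries no conjugates and Jensen's inequality applies directly.
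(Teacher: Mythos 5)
Your proposal is correct and follows essentially the same route as the paper: identify the left-hand side as $\norm{1_{\mathcal{U}_{2n}}}_{U_d}^{2^d}$ (the paper works with the normalized indicator $1_{\mathcal{U}_{2n}}/\E[1_{\mathcal{U}_{2n}}]$, which is only a cosmetic difference) and apply the monotonicity $\norm{\cdot}_{U_d}\geq\norm{\cdot}_{U_1}$. The only divergence is that you re-derive the monotonicity chain by a Cauchy--Schwarz/Jensen induction, whereas the paper simply cites it.
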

\begin{proof}
Define $f(x) = \frac{1_{x\in\mathcal{U}_{2n}}}{\E[1_{x\in\mathcal{U}_{2n}}]}$, and
note that the left hand side is equal to
\[
\E[1_{\mathcal{U}_{2n}}]^{2^d}
\Expect{x,h_1,\ldots,h_{d}}
{\prod\limits_{T\subseteq [d]}f\left(x\oplus \bigoplus_{i\in T}h_i\right)}
=
\E[1_{\mathcal{U}_{2n}}]^{2^d}
\norm{f}_{U_{d}}^{2^d}
\geq
\E[1_{\mathcal{U}_{2n}}]^{2^d}
\norm{f}_{U_{1}}^{2^d}
=
\E[1_{\mathcal{U}_{2n}}]^{2^d},
\]
where we used the fact
that $\norm{f}_{U_d}\geq \norm{f}_{U_1}$
and that $\norm{f}_{U_1} = 1$.
\end{proof}

Towards the proof of Theorem~\ref{thm:gowers_low_soundness}
we shall need the inverse Gowers
theorem over fields with small
characteristics due to~\cite{tao2012inverse}.
\begin{thm}\label{thm:inverse_gowers_low_soundness}
For all $\eps>0$ there is $\delta>0$
such that if $f\colon \{0,1\}^n\to[-1,1]$
satisfies that $\norm{f}_{U_d}\geq \eps$. Then there exists a non classical polynomial $p\colon \{0,1\}^n\to \mathbb{T}$ of degree at
most $d-1$ such that
\[
\card{\inner{f}{e^{2\pi{\bf i} p}}}\geq \delta.
\]
\end{thm}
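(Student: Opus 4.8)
The plan is to prove Theorem~\ref{thm:inverse_gowers_low_soundness} by induction on the degree $d$, following the structure of the inverse theorem for the Gowers norms over $\mathbb{F}_2^n$. The cases $d\leq 2$ are elementary: for $d=1$ one has $\norm{f}_{U_1} = |\E[f]|$, so $p\equiv 0$ works; for $d=2$, exactly as in the computation in Claim~\ref{claim:trivial_blr} one has $\sum_{S}\widehat{f}(S)^4 = \norm{f}_{U_2}^4\geq \eps^4$, so some linear character $\chi_S$ satisfies $|\inner{f}{\chi_S}|\geq \eps^2$, and one takes $p$ to be the non-classical polynomial of degree $1$ with $e^{2\pi{\bf i}p}=\chi_S$.

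For the inductive step, suppose the theorem holds in degree $d-1$ and $\norm{f}_{U_d}\geq \eps$. Peeling off one derivative gives $\norm{f}_{U_d}^{2^d} = \Expect{h}{\norm{\partial_h f}_{U_{d-1}}^{2^{d-1}}}$ (with $\partial_h$ as in Definition~\ref{def:derv_mult}), so for at least an $\eps^{2^d}/2$ fraction of directions $h$ we have $\norm{\partial_h f}_{U_{d-1}}\geq\eps'$ for some $\eps'=\eps'(\eps,d)>0$. For each such ``good'' $h$ the inductive hypothesis supplies a non-classical polynomial $P_h$ of degree at most $d-2$ with $|\inner{\partial_h f}{e^{2\pi{\bf i}P_h}}|\geq\delta'$. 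The entire difficulty is to upgrade this \emph{family} $\{P_h\}$ of derivative polynomials into a \emph{single} non-classical polynomial $P$ of degree at most $d-1$ whose discrete derivative $\partial_h P$ essentially agrees with $P_h$ on a dense set of $h$; once this is established, a final Cauchy--Schwarz in $h$ --- combined with the lower bound $|\inner{\partial_h f}{e^{2\pi{\bf i}P_h}}|\geq\delta'$ --- yields $|\inner{f}{e^{2\pi{\bf i}P}}|\geq\delta$ and closes the induction.

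To carry out the integration I would mimic the finite-field inverse theorem. A Cauchy--Schwarz/popularity argument --- invoking the inductive hypothesis once more to control the ``Cauchy--Schwarz complexity'' of the relevant averages --- shows that for a positive fraction of additive quadruples $h_1+h_2=h_3+h_4$ the combination $P_{h_1}+P_{h_2}-P_{h_3}-P_{h_4}$ is, as a phase, correlated with a polynomial of degree at most $d-3$; that is, $h\mapsto P_h$ is an approximate cocycle modulo lower-degree terms, mirroring the algebraic identity $\partial_{h+h'}P-\partial_h P-\partial_{h'}P = \partial_{h,h'}P$. One then feeds this into the additive-combinatorial core of the argument: a Balog--Szemer\'edi--Gowers-type regularization upgrading ``for many quadruples'' to ``for all $h$, after restricting to a subspace of bounded codimension'', followed by the \emph{symmetry argument}, which symmetrizes the top multilinear part of $P_h$ in the variables of $h$ and of $x$. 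It is precisely here that non-classical polynomials become unavoidable: symmetrizing over $\mathbb{F}_2$ forces division by powers of $2$, so the integrated $P$ acquires dyadic denominators and genuinely higher ``depth'' --- this is why the conclusion cannot be restricted to classical polynomials, as the Lovett--Meshulam--Samorodnitsky degree-$4$ example confirms. Finally one integrates the now-symmetric cocycle to obtain the non-classical $P$ of degree at most $d-1$, using that passing to a bounded-codimension subspace changes all correlations by only a bounded factor.

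I expect the main obstacle to be exactly this middle stage: passing from ``for most $h$, $\partial_h f$ correlates with \emph{some} polynomial $P_h$ of degree at most $d-2$'' to ``the $P_h$ are mutually consistent --- they are the derivatives, up to a bounded-codimension restriction and lower-degree corrections, of one polynomial of degree at most $d-1$.'' The base case, the two outer derivative/Cauchy--Schwarz manipulations, and the final integration are comparatively routine; the symmetrization together with the additive-combinatorial regularization (and the bookkeeping of non-classical depth that accompanies them) carry all the weight and are responsible for the fact that the dependence $\delta=\delta(\eps,d)$ extracted this way is ineffective. (A quasi-polynomial dependence is available through the refinement of this scheme due to Gowers and Mili\'cevi\'c, but with the same overall skeleton.)
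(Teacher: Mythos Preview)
The paper does not prove this statement at all: it is quoted as the inverse Gowers theorem over fields of small characteristic due to Tao and Ziegler~\cite{tao2012inverse}, and is used purely as a black box in the proof of Theorem~\ref{thm:gowers_low_soundness}. So there is no ``paper's own proof'' to compare against --- the intended content here is a citation, not an argument.

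Your outline is a reasonable high-level sketch of how the Tao--Ziegler inverse theorem is actually proved (induction on $d$; for most $h$ the derivative $\partial_h f$ correlates with some degree-$(d-2)$ phase $P_h$; Cauchy--Schwarz to get approximate additive structure among the $P_h$; Balog--Szemer\'edi--Gowers and the symmetry argument to integrate to a single non-classical $P$ of degree $\leq d-1$; a final Cauchy--Schwarz to conclude correlation of $f$ with $e^{2\pi{\bf i}P}$). You correctly identify that the symmetrization step forces non-classical polynomials in characteristic $2$ and that this is where the Lovett--Meshulam--Samorodnitsky example bites. That said, this is a sketch rather than a proof: the ``integration'' step --- passing from approximately consistent $\{P_h\}$ to a genuine $P$ --- hides essentially all of the work (regularity/equidistribution of polynomial factors, tracking depth, the cohomological interpretation), and filling it in is the content of the cited paper. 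For the purposes of the present paper, you should simply cite~\cite{tao2012inverse} rather than attempt to reprove it.
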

We are now ready to prove
Theorem~\ref{thm:gowers_low_soundness}.
\begin{proof}[Proof of Theorem~\ref{thm:gowers_low_soundness}]
We take parameters
\[
0< \xi\ll \eta\ll \delta \ll \eps\ll d^{-1} = k^{-1}\leq 1.
\]
    Take a function
    $f\colon \mathcal{U}_{2n}\to\{0,1\}$
    and define $f'\colon \{0,1\}^{2n}\to\mathbb{R}$
    by
    \[
    f'(x) = (-1)^{f(x)}
    1_{x\in\mathcal{U}_{2n}}
    \frac{\E[1_{\mathcal{D}_{2n,2d}}]}{\E[1_{\mathcal{U}_{2n}}]}.
    \]
    Note that
    \begin{align*}
    \norm{f'}_{U_{d}}^{2^{d}}
    &=
    \E[1_{\mathcal{D}_{2n,2d}}]^{2^{d}}
    \frac{\Prob{x,h_1,\ldots,h_{d}\in \{0,1\}^{2n}}{x\oplus\bigoplus_{i\in T}h_i\in\mathcal{U}_{2n}~\forall T\subseteq [d]}}{\E[1_{U_{2n}}]^{2^{d}}}\\
    &\cdot
    \left(2
    \cProb{x,h_1,\ldots,h_{d}\in \{0,1\}^{2n}}{x\oplus\bigoplus_{i\in T} h_i\in\mathcal{U}_{2n}~\forall T\subseteq [d]}{\sum_{T}f(x\oplus\bigoplus_{i\in T}h_i)=0} - 1\right).
    \end{align*}
    The first term is at least
    $\Omega_{d}(1)$,
    the second term is at least
    $1$ by Fact~\ref{fact:use_gowers_CS} and
    the third term is at least
    $2\eps$ by the premise.
    Thus, $\norm{f'}_{U_{d}}^{2^{d}}\geq
    \Omega_{d}(\eps)$.
    Take
    \[
    \nu = \frac{\E[1_{\mathcal{D}_{2n,2d}}]}{\E[1_{\mathcal{U}_{2n}}]}1_{\mathcal{U}_{2n}} + 1 - 1_{\mathcal{D}_{2n,2k}},
    \]
    and note that $\nu$ is non-negative and that $\card{f'(x)}\leq \nu(x)$
    pointwise. By Theorem~\ref{thm:main}
    we have
    \[
    \norm{\nu - 1}_{U_{2d}}
    =
    \left\|
    \frac{\E[1_{\mathcal{D}_{2n,2d}}]}{\E[1_{\mathcal{U}_{2n}}]}
    1_{\mathcal{U}_{2n}} - 1_{\mathcal{D}_{2n,2d}}
    \right\|_{U_{2d}}
    \leq
        \left\|
    \frac{1}{\E[1_{\mathcal{U}_{2n}}]}
    1_{\mathcal{U}_{2n}} - \frac{1_{\mathcal{D}_{2n,2d}}}{\E[1_{\mathcal{D}_{2n,2d}}]}
    \right\|_{U_{2d}}
    =o(1),
    \]
    and hence for sufficiently large
    $n_0$, if $n\geq n_0$ we have that
    $\norm{\nu - 1}_{U_{2d}}\leq \xi$.
    Applying Theorem~\ref{thm:dense_model_gowers}
    we get that there is a function
    $\tilde{f}\colon \{0,1\}^n\to[-1,1]$
    with $\norm{f'-\tilde{f}}_{U_d}\leq \eta$. It follows from the triangle
    inequality that
    \[
    \norm{\tilde{f}}_{U_{d}}
    \geq
    \norm{f'}_{U_{d}}
    -\norm{f'-\tilde{f}}_{U_{d}}
    \geq
    \Omega_d(\eps^{2^{-d}})
    -
    \eta
    \geq
    \Omega_d(\eps^{2^{-d}}).
    \]
    By Theorem~\ref{thm:inverse_gowers_low_soundness} we conclude that there
    exists a non-classical polynomial
    $p\colon \{0,1\}^{n}\to \mathbb{T}$
    of degree at most $d-1$ such
    that
    \[
    \card{\inner{\tilde{f}}{e^{2\pi{\bf i}p}}}\geq\delta.
    \]
    Thus, by the triangle inequality
    \[
    \card{\inner{f'}{e^{2\pi{\bf i}p}}}
    \geq
    \card{\inner{\tilde{f}}{e^{2\pi{\bf i}p}}}
    -
    \card{\inner{f'-\tilde{f}}{e^{2\pi{\bf i}p}}}
    \geq
    \delta
    -
    \card{\inner{f'-\tilde{f}}{e^{2\pi{\bf i}p}}}.
    \]
    Note that
    \[
        \card{\inner{f'-\tilde{f}}{e^{2\pi{\bf i}p}}}
        =
        \norm{(f'-\tilde{f})e^{-2\pi{\bf i}p}}_{U_1}
        \leq
        \norm{(f'-\tilde{f})e^{-2\pi{\bf i}p}}_{U_d}
        =\norm{f'-\tilde{f}}_{U_d},
    \]
    where in the last transition we used
    the fact that $p$ is a non-classical
    polynomial of degree at most $d-1$
    and hence its order $d$ derivatives vanish. As
    $\norm{f'-\tilde{f}}_{U_d} \leq \eta$ we conclude
    that $\card{\inner{f'}{e^{2\pi{\bf i}p}}}\geq \frac{\delta}{2}$,
    and so
    \[
    \card{\Expect{x\in \mathcal{U}_{2n}}
    {
    (-1)^{f(x)}
    e^{-2\pi {\bf i} p(x)}}}\geq \frac{\delta}{2}.
    \qedhere
    \]
\end{proof}

\subsection{Proof of Theorem~\ref{thm:biased_rank}}
The proof requires~\cite[Theorem 1.20]{tao2012inverse},
stated below for convenience.
\begin{thm}\label{thm:GIP}
Let $\delta>0$,
and suppose that
$p\colon \{0,1\}^{2n}\to \mathbb{T}$ is a non-classical
polynomial of degree
$d$ such that $\|e^{2\pi{\bf i} p}\|_{U_{d}}\geq \delta$. Then there exists $L = L(\delta, d)$, non-classical polynomials $Q_1,\ldots,Q_{L}$ of degree at most $d-1$
and $F\colon [0,1)^{L}\to \mathbb{T}$
such that for all $x\in\{0,1\}^{2n}$
we have $P(x) = F(Q_1(x),\ldots,Q_L(x))$.
\end{thm}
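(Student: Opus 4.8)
The plan is to transfer the bias of $P$ from the middle slice $\mathcal{U}_{2n}$ to the constant-density set $\mathcal{D}:=\mathcal{D}_{2n,d+1}=\sett{x\in\{0,1\}^{2n}}{\card{x}= a\pmod{2^d}}$ using Theorem~\ref{thm:main}, then to expand $1_{\mathcal{D}}$ into weight phases so as to extract a \emph{globally} biased non-classical polynomial of degree at most $d$ on the whole cube, and finally to feed this polynomial into the cube inverse theorem, Theorem~\ref{thm:GIP}. Write $F=e^{2\pi{\bf i}P}$, so the hypothesis reads $\card{\Expect{x\in\mathcal{U}_{2n}}{F(x)}}\geq\delta$; since $n= a\pmod{2^{d}}$ we have $\mathcal{U}_{2n}\subseteq\mathcal{D}$, and $\E[1_{\mathcal{D}}]\geq c(d)>0$ once $n$ is large (as recorded in the introduction).

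\emph{Transfer.} I would put $\psi=\frac{1_{\mathcal{U}_{2n}}}{\E[1_{\mathcal{U}_{2n}}]}-\frac{1_{\mathcal{D}}}{\E[1_{\mathcal{D}}]}$, so that $\inner{F}{\psi}=\Expect{x\in\mathcal{U}_{2n}}{F(x)}-\Expect{x\in\mathcal{D}}{F(x)}$. Since $-P$ is a non-classical polynomial of degree at most $d$ we have $\partial_{h_1,\ldots,h_{d+1}}\overline F\equiv 1$, and as the discrete derivative of Definition~\ref{def:derv_mult} is multiplicative this gives $\norm{\psi\overline F}_{U_{d+1}}=\norm{\psi}_{U_{d+1}}$; hence by monotonicity of the Gowers norms
\[
\card{\inner{F}{\psi}}=\norm{\psi\overline F}_{U_1}\leq\norm{\psi\overline F}_{U_{d+1}}=\norm{\psi}_{U_{d+1}}.
\]
Theorem~\ref{thm:main} with $k=d+1$ (applicable since $n= a\pmod{2^{d}}$) then makes $\norm{\psi}_{U_{d+1}}\leq\delta/2$ for $n\geq n_0(\delta,d)$, so $\card{\Expect{x\in\mathcal{D}}{F(x)}}\geq\delta/2$ and
\[
\card{\Expect{x\in\{0,1\}^{2n}}{F(x)1_{\mathcal{D}}(x)}}=\E[1_{\mathcal{D}}]\cdot\card{\Expect{x\in\mathcal{D}}{F(x)}}\geq\tfrac{c(d)\delta}{2}.
\]
In contrast with the proofs of Theorems~\ref{thm:linearity} and~\ref{thm:gowers_low_soundness}, this step will use only the Gowers estimate of Theorem~\ref{thm:main} and the generalized von Neumann inequality, not the model-function construction of Theorem~\ref{thm:dense_model_gowers}.

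\emph{Weight phase and cube inverse theorem.} Writing $1_{\mathcal{D}}(x)=2^{-d}\sum_{j=0}^{2^d-1}e^{2\pi{\bf i}j(\card{x}-a)/2^d}$ (valid as $\mathcal{D}=\sett{x}{\card x= a\pmod{2^d}}$) and applying the triangle inequality, there is $j\in\{0,\ldots,2^d-1\}$ with
\[
\card{\Expect{x\in\{0,1\}^{2n}}{e^{2\pi{\bf i}\left(P(x)+\frac{j(\card{x}-a)}{2^d}\right)}}}\geq\tfrac{c(d)\delta}{2^{d+1}}.
\]
Set $R(x)=P(x)+\frac{j(\card{x}-a)}{2^d}\pmod 1$. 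Because $x\mapsto\frac{\card{x}}{2^d}\pmod 1$ is the standard symmetric non-classical polynomial of degree $d$ on $\{0,1\}^{2n}$ and $\deg P=d$, the function $R$ is a non-classical polynomial of degree at most $d$, and the last display together with monotonicity gives $\norm{e^{2\pi{\bf i}R}}_{U_{\deg R}}\geq\norm{e^{2\pi{\bf i}R}}_{U_1}\geq\frac{c(d)\delta}{2^{d+1}}$. Theorem~\ref{thm:GIP}, applied to $R$ with the (possibly smaller) degree $\deg R\leq d$, then yields $L=L(\delta,d)$, non-classical polynomials $Q_1,\ldots,Q_L\colon\{0,1\}^{2n}\to\mathbb{T}$ of degree at most $d-1$, and a map $\Gamma\colon[0,1)^L\to[0,1)$ (the map from Theorem~\ref{thm:GIP}, with values taken in $[0,1)$) with $R(x)=\Gamma(Q_1(x),\ldots,Q_L(x))\pmod 1$ for all $x\in\{0,1\}^{2n}$, i.e.
\[
P(x)+\frac{j(\card{x}-a)}{2^d}=\Gamma(Q_1(x),\ldots,Q_L(x))\pmod 1\qquad\text{for all }x\in\{0,1\}^{2n}.
\]
Replacing $j$ by $-j$ puts this in the stated form. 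For $x\in\mathcal{U}_{2n}$ one has $\card{x}=n= a\pmod{2^d}$, so $\frac{j(\card{x}-a)}{2^d}=\frac{j(n-a)}{2^d}\in\mathbb{Z}$ vanishes modulo $1$, giving $P(x)=\Gamma(Q_1(x),\ldots,Q_L(x))\pmod 1$ on the slice.

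\emph{Main obstacle.} The only non-routine ingredient is the transfer step: Theorem~\ref{thm:main} is precisely what upgrades a bias on the $\Theta(1/\sqrt n)$-density slice to a bias on the constant-density set $\mathcal{D}$, and the choice of $\mathcal{D}$ — slices of Hamming weight $\equiv n\pmod{2^d}$ — is dictated both by the hypothesis of Theorem~\ref{thm:main} and by the need for $1_{\mathcal{D}}$ to decompose into exactly the degree-$\leq d$ weight phases $e^{2\pi{\bf i}j(\card{x}-a)/2^d}$ that supply the correction term $\frac{j(\card{x}-a)}{2^d}$. The remainder is bookkeeping with Gowers-norm identities, a pigeonhole over $j\in\{0,\ldots,2^d-1\}$, and the observation that on $\mathcal{U}_{2n}$ this correction term is an integer and hence invisible modulo $1$.
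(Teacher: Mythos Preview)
Your write-up is not a proof of the stated Theorem~\ref{thm:GIP}. That theorem is the Tao--Ziegler inverse/rank result, which the paper does not prove at all; it is quoted verbatim from~\cite[Theorem~1.20]{tao2012inverse} and used as a black box. What you have written is a proof of Theorem~\ref{thm:biased_rank} (the biased-rank theorem for the slice), and indeed you explicitly \emph{invoke} Theorem~\ref{thm:GIP} as a tool in the final step, so your argument cannot be a proof of it.

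If we instead compare your text to the paper's proof of Theorem~\ref{thm:biased_rank} in Section~4.3, then your approach is essentially identical: you bound $\bigl|\inner{e^{2\pi\mathbf{i}P}}{\psi}\bigr|$ by $\norm{\psi}_{U_{d+1}}$ via $\norm{\psi\,\overline{F}}_{U_{d+1}}=\norm{\psi}_{U_{d+1}}$ (since $\deg P\le d$) and Gowers monotonicity, apply Theorem~\ref{thm:main} to transfer the bias from $\mathcal{U}_{2n}$ to $\mathcal{D}_{2n,d+1}$, expand $1_{\mathcal{D}_{2n,d+1}}$ as $2^{-d}\sum_j e^{2\pi\mathbf{i}j(\card{x}-a)/2^d}$, pigeonhole on $j$, and feed the resulting globally biased degree-$\le d$ polynomial $R$ into Theorem~\ref{thm:GIP}. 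This is exactly the paper's argument; your only cosmetic differences are the harmless $-a$ shift in the phase expansion and the explicit remark that $\deg R$ might be strictly less than $d$ (which the paper glosses over).
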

We are now ready to prove Theorem~\ref{thm:biased_rank}, and we assume for convenience of notations that $n=0\pmod{2^{d}}$.
Note that
\begin{align*}
\card{\Expect{x}{e^{2\pi{\bf i}P(x)}
\left(
\frac{1_{\mathcal{U}_{2n}}(x)}{\E[1_{\mathcal{U}_{2n}}]}
-
\frac{1_{\mathcal{D}_{2n,d+1}}(x)}{\E[1_{\mathcal{D}_{2n,d+1}}]}
\right)}}
&=\left\|{e^{2\pi{\bf i}P}
\left(
\frac{1_{\mathcal{U}_{2n}}}{\E[1_{\mathcal{U}_{2n}}]}
-
\frac{1_{\mathcal{D}_{2n,d+1}}}{\E[1_{\mathcal{D}_{2n,d+1}}]}
\right)}\right\|_{U_{1}}\\
&\leq
\left\|{e^{2\pi{\bf i}P}
\left(
\frac{1_{\mathcal{U}_{2n}}}{\E[1_{\mathcal{U}_{2n}}]}
-
\frac{1_{\mathcal{D}_{2n,d+1}}}{\E[1_{\mathcal{D}_{2n,d+1}}]}
\right)}\right\|_{U_{d+1}}\\
&=
\left\|
\frac{1_{\mathcal{U}_{2n}}}{\E[1_{\mathcal{U}_{2n}}]}
-
\frac{1_{\mathcal{D}_{2n,d+1}}}{\E[1_{\mathcal{D}_{2n,d+1}}]}\right\|_{U_{d+1}}\\
&=o(1),
\end{align*}
where we used Theorem~\ref{thm:main}.
Thus, as
the assumption of the
theorem implies that
$\card{\Expect{x}{\frac{e^{2\pi{\bf i} P(x)}}{\E[1_{\mathcal{U}_{2n}}]}
1_{\mathcal{U}_{2n}}(x)}}\geq \delta$, we conclude that
\[
\card{\Expect{x}{\frac{e^{2\pi{\bf i} P(x)}}{\E[1_{\mathcal{D}_{2n, d+1}}]}
1_{\mathcal{D}_{2n, d+1}(x)}}}\geq \delta - o(1)
\geq \frac{\delta}{2}.
\]
By definition,
\[
1_{\mathcal{D}_{2n, d+1}}(x)
=1_{\card{x} = 0\pmod{2^d}}
=
\frac{1}{2^d}
\sum\limits_{j=0}^{2^{d}-1}
e^{2\pi{\bf i} \frac{j\card{x}}{2^d}},
\]
and plugging this above implies that there exists $j$ such that $\card{\Expect{x}{e^{2\pi{\bf i}(P(x) + j\card{x}/2^d)}}}
\geq \frac{\delta}{2}$.
Note that the polynomial
$P(x) + j\card{x}/2^d$
is a non-classical
polynomial of degree $d$.
The conclusion is now immediate from Theorem~\ref{thm:GIP}.
\section{Acknowledgements}
We thank Zach Hunter for carefully reading the
paper, giving us useful feedback and pointing
a simplification in the proof of Fact~\ref{fact:use_gowers_CS}.
We thank an anonymous referee for helpful comments.
\bibliography{ref}
\bibliographystyle{plain}
\end{document}